\documentclass{amsart}

\newtheorem{thm}{Theorem}[section]

\newtheorem{lemma}[thm]{Lemma}
\theoremstyle{definition}
\newtheorem{defn}[thm]{Definition}
\theoremstyle{remark}
\newtheorem{rem}[thm]{Remark}
\numberwithin{equation}{section}
\newcommand{\R}{\mathbb R}
\newcommand{\N}{\mathbb N}

\newcommand{\be}{\begin{equation}}
\newcommand{\ee}{\end{equation}}

\begin{document}

\title{Radial solutions for Hamiltonian Elliptic Systems with weights}

\author{Pablo L. De N\'apoli}
\address{Departamento de Matem\'atica, Facultad de Ciencias Exactas y Naturales,
Universidad de Buenos Aires, 1428 Buenos Aires, Argentina} \email{pdenapo@dm.uba.ar}

\author{Irene Drelichman}
\address{Departamento de Matem\'atica, Facultad de Ciencias Exactas y Naturales,
Universidad de Buenos Aires, 1428 Buenos Aires, Argentina} \email{irene@drelichman.com}

\author{Ricardo G. Dur\'an}
\address{Departamento de Matem\'atica, Facultad de Ciencias Exactas y Naturales,
Universidad de Buenos Aires, 1428 Buenos Aires, Argentina} \email{rduran@dm.uba.ar}

\keywords{elliptic system, fractional-order Sobolev spaces, weighted
imbedding, variational problems}
\subjclass[2000]{35J60, 35J20, 35J50, 46E35}

\thanks{Supported by ANPCyT under grant  PICT 2006-01307, by Universidad de Buenos Aires under grants X070 and X837 and by
CONICET under grants PIP 5477 and 5478. The authors are members of CONICET, Argentina.}
\begin{abstract}
We prove the existence of infinitely many radial solutions for elliptic systems
in $\R^n$ with power weights. A key tool for the proof will be a weighted imbedding theorem
for fractional-order Sobolev spaces, that could be of independent interest.
\end{abstract}

\maketitle

\section{Introduction}

In this paper, we study the existence of non-trivial, radially symmetric solutions of the
following Hamiltonian elliptic system in $\R^n$:
\be
 \left\{
\begin{array}{rclll}
- \Delta u + u & = &  |x|^a |v|^{p-2}v  \\
- \Delta v + v & = &  |x|^b |u|^{q-2}u  \\
\end{array}
\right.
\label{sistema-rn}
\ee
\medskip
More precisely, we will prove the following theorem:
\begin{thm}
Assume that the following conditions hold:
\be
p,q > 2, \; \frac{1}{p} + \frac{1}{q} < 1
\label{superlinear}
\ee
\be
0<a<\frac{(n-1)(p-2)}{2},\quad 0 < b < \frac{(n-1)(q-2)}{2}
\label{limite-pesos}
\ee
\be
\frac{n+a}{p} + \frac{n+b}{q} > n-2
\label{subcritical}
\ee
and
\be q < \frac{2(n+b)}{n-4}, \quad p < \frac{2(n+a)}{n-4} \quad
\hbox{if} \; n \geq 5
\label{condicion-extra}
\ee
Then (\ref{sistema-rn}) admits infinitely many radially symmetric
weak-solutions (see Definition \ref{weak-sol} below)
\label{main-result}
\end{thm}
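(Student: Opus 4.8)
The plan is to treat \eqref{sistema-rn} variationally as a strongly indefinite (Hamiltonian) problem, in the spirit of de~Figueiredo--Felmer, using the weighted fractional imbedding stated above in place of the classical Strauss radial lemma. Write $A=-\Delta+I$ on $\R^n$ and, for $\sigma\in(0,2)$, let $H^\sigma=H^\sigma(\R^n)$ with $\|w\|_{H^\sigma}=\|A^{\sigma/2}w\|_{L^2}$. The first step is to pick $s,t\in(0,2)$ with $s+t=2$ so that the weighted imbedding theorem gives \emph{compact} imbeddings $H^t_{\mathrm{rad}}(\R^n)\hookrightarrow L^p(|x|^a\,dx)$ and $H^s_{\mathrm{rad}}(\R^n)\hookrightarrow L^q(|x|^b\,dx)$: condition \eqref{limite-pesos} is the admissible range of the power weights in that theorem, and the strict (hence compact) subcriticality requirements for these two imbeddings are of the form $t>\tfrac n2-\tfrac{n+a}{p}$ and $s>\tfrac n2-\tfrac{n+b}{q}$, so that adding them and using $s+t=2$ reproduces \eqref{subcritical}, while imposing $0<s,t<2$ reproduces \eqref{condicion-extra} (with no extra restriction for $n\le4$). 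Thus the admissible pairs $(s,t)$ form a nonempty open interval; fix one and set $E=H^s_{\mathrm{rad}}(\R^n)\times H^t_{\mathrm{rad}}(\R^n)$.

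On $E$ I would consider
\[
\Phi(u,v)=\int_{\R^n}A^{s/2}u\,A^{t/2}v\,dx-\frac1p\int_{\R^n}|x|^a|v|^p\,dx-\frac1q\int_{\R^n}|x|^b|u|^q\,dx .
\]
By continuity of the two imbeddings $\Phi\in C^1(E,\R)$ and $\Phi$ is even; its Euler--Lagrange equations are $Au=|x|^a|v|^{p-2}v$, $Av=|x|^b|u|^{q-2}u$, so critical points are radial weak solutions in the sense of Definition~\ref{weak-sol}, and since $A$ and the weights commute with the $O(n)$-action, Palais' symmetric criticality principle guarantees these are genuine critical points. The quadratic part $Q(u,v)=\int A^{s/2}u\,A^{t/2}v$ is strongly indefinite: with the isomorphism $L=A^{(s-t)/2}\colon H^s_{\mathrm{rad}}\to H^t_{\mathrm{rad}}$ and $E^{\pm}=\{(w,\pm Lw):w\in H^s_{\mathrm{rad}}\}$ one has $E=E^+\oplus E^-$, both summands infinite-dimensional, $Q>0$ on $E^+$ and $Q<0$ on $E^-$.

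The next step is a $\mathbb{Z}_2$-symmetric linking (fountain-type) theorem for strongly indefinite functionals (as in Benci--Rabinowitz or de~Figueiredo--Felmer), for which two ingredients must be verified. (i)~\emph{Geometry}: $\psi(u,v):=\tfrac1p\int|x|^a|v|^p+\tfrac1q\int|x|^b|u|^q\ge0$ is superquadratic since $p,q>2$; because the imbeddings above are compact, their constants on the high-frequency subspaces $Z_k$ tend to $0$, which gives $\inf_{\partial B_{r_k}\cap Z_k}\Phi\to+\infty$ for suitable radii $r_k$, while superquadraticity together with $\tfrac1p+\tfrac1q<1$ forces $\Phi\to-\infty$ on the finite-dimensional subspaces $Y_k$; hence the minimax values $c_k\to+\infty$. (ii)~\emph{Compactness}: for a $(PS)^*_c$ sequence $(u_n,v_n)$, testing $\Phi'$ with $(u_n,v_n)$ and using $p,q>2$, $\tfrac1p+\tfrac1q<1$ yields a uniform $E$-bound; passing to a weak limit, the compactness of the two weighted imbeddings handles the nonlinear part and the Galerkin approximation handles the indefinite part, upgrading to strong convergence. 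The abstract theorem then yields critical points at all levels $c_k\to\infty$, and evenness makes them distinct, giving infinitely many radial weak solutions of \eqref{sistema-rn}.

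I expect the main obstacle to be the compactness step (ii): since $Q$ is a genuine cross term, $\langle\Phi'(z_n)-\Phi'(z),z_n-z\rangle\to0$ does not by itself give $z_n\to z$, so one must run the argument through the $(PS)^*$ formulation relative to finite-dimensional approximations of $E^\pm$, exploiting compactness of the weighted fractional imbeddings to control the negative-definite part. A secondary, but essential, task is to make rigorous the claim that \eqref{limite-pesos}--\eqref{condicion-extra} are exactly what is needed for the admissible interval of exponents $(s,t)$ to be nonempty, and to pin down precisely where $\tfrac1p+\tfrac1q<1$ from \eqref{superlinear} enters (superquadratic linking geometry and boundedness of $(PS)^*$ sequences).
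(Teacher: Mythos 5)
Your overall strategy is the one the paper follows: fractional spaces $H^s_{rad}\times H^t_{rad}$ with $s+t=2$ chosen so that Theorem \ref{imbedding-theorem} gives the compact imbeddings (\ref{imbeddings}), the strongly indefinite functional (\ref{our-functional}) with the splitting $E=E^+\oplus E^-$, and an even, fountain-type minimax theorem with a Galerkin-type $(PS)^{\mathcal F}_c$ condition (the paper uses Theorem \ref{Fountain-theorem} of Bartsch--de Figueiredo). Your accounting of how (\ref{limite-pesos})--(\ref{condicion-extra}) make the admissible interval of $(s,t)$ nonempty, and your treatment of the compactness step, match the paper in substance: for the latter, note that testing $\Phi'$ with $z_n$ alone only bounds $\int H$ (the quadratic part cancels); the paper then recovers the $E$-bound by a duality argument, testing with $(\psi,0)$ and $(0,\psi)$ for $\psi$ in the Galerkin subspaces to get (\ref{cota2})--(\ref{cota3}), and concludes convergence from complete continuity of $\nabla\Psi$ (Remark 2.1 of \cite{BdF}). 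You flagged exactly this as the delicate point, and your plan is the right one.

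The genuine gap is in your geometric step (i). You assert that superquadraticity forces $\Phi\to-\infty$ on ``finite-dimensional subspaces $Y_k$'', but in the strongly indefinite fountain theorem the sets on which $\Phi$ must be made negative are of the form $E_k^+\oplus E^-$, i.e.\ they contain the \emph{whole infinite-dimensional} negative subspace $E^-$, so the usual equivalence-of-norms argument on finite-dimensional spaces does not apply. This is precisely the point the paper singles out: the bounded-domain shortcut (Lemma 3.2 of \cite{dFPR}) fails in $\R^n$ because the linear part has no discrete spectrum. The paper's Lemma \ref{lema-geo2} (condition $(\Phi_3')$) does real work here: it introduces the rescaling isomorphisms $T_\lambda(u,v)=(\lambda^\mu u,\lambda^\nu v)$ with $\mu=(m-q)/q$, $\nu=(m-p)/p$, $m>\max(p,q)$, projects $z^-$ onto $E_k^-$, and proves a lower bound $\|u\|_{L^q(|x|^b)}\ge\gamma_k\|\bar u\|_{H^s}$ on $E^s_k$ via H\"older with the dual weight $|x|^{-b/(q-1)}$ (using Schwartz basis vectors and $b<n(q-1)$ for local integrability) together with equivalence of norms only on the finite-dimensional projection; combining this with $Q(T_\lambda z)\le\lambda^{\mu+\nu+2}$ and $m>\mu+\nu+2$ yields $\Phi(T_{\lambda_k}z)<0$ and $\|T_{\lambda_k}z\|\ge r_k$, which is condition $(\Phi_3')$ (and $(\Phi_4')$ is then immediate). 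Without some substitute for this construction, your step (i) does not deliver the linking geometry needed for the strongly indefinite multiplicity theorem, so you should either adopt the $T_k$ rescaling argument or supply an alternative proof of negativity of $\Phi$ on large sets of $E_k^+\oplus E^-$.
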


\medskip

In order to explain the significance of our result, let us briefly review some of the related results
in the literature. The related scalar equation
\be \left\{
\begin{array}{rclll}
-\Delta u & = & |x|^a u^{p-1} & \hbox{in} & \Omega \\
u >0      &   &                    & \hbox{in} & \Omega \\
u = 0     &   &                    & \hbox{on}& \partial \Omega \\
\end{array} \right.
\label{scalar-equation}
\ee
with $\Omega$ being the unit ball in $\R^n$, $a>0$ and $p > 2$ is known in the
literature as the H\'enon equation, since M. H\'enon introduced it as a model of
spherically symmetric stellar clusters \cite{H}. It is well known that the
presence of the weight $|x|^a$ modifies the global homogeneity of the
equation, and shifts up the threshold between existence and non existence
given by the application of the Pohozaev Identity. Indeed,  W. M. Ni proved
in \cite{Ni} the existence of a solution of (\ref{scalar-equation})
if $2 < p <  2^* + \frac{2a}{n-2} $ (here $2^*=\frac{2n}{n-2}$ denotes the
usual Sobolev critical exponent). The solutions found by Ni are radial and
arise via application of the Mountain Pass Theorem in the space of radial
functions. Problems in the whole space have also been considered. For instance,
for the related equation
\be
-\Delta u + |x|^a u = |x|^b u^{p-1}, \quad u \in H^1(\R^n)
\label{scalar-whole-space}
\ee
P. Sintzoff \cite{S} proved the existence of infinitely many radial solutions in the case
\be n \geq 3, p>1, 2<p<  2^* + \frac{2a}{n-2} \; \hbox{and} \; 2b -
\left(1+\frac{p}{2}\right) a < (n-1)(p-2) \label{Sintzoff-result} \ee

\medskip

Elliptic systems like (\ref{sistema-rn}) have also been extensively studied. An important
feature of this system is its Hamiltonian structure, that allows us to find weak
solutions using the methods of critical point theory. More precisely,  its solutions are
given by the critical points of a functional in a suitable functional space (see
(\ref{our-functional}) below). We refer the reader to the excellent survey \cite{dF}
for more details on this subject.

As a model problem, we may consider the system
\be
\left\{
\begin{array}{rclll}
-\Delta u &=& |x|^a |v|^{p-2}v & \hbox{in} & \Omega \\
-\Delta v &=& |x|^b  |u|^{q-2}u & \hbox{in} & \Omega \\
\end{array}
\right.
\label{sistemita}
\ee
with Dirichlet boundary conditions ($u=v=0$ in $\partial \Omega$), where $\Omega \subset \R^n$
is a bounded domain. It is known that in the unweighted case
$a= b =0$ (\ref{sistemita})  admits infinitely many non-trivial
solutions if
$$ p,q > 2, \; \frac{1}{p} + \frac{1}{q} < 1  \quad (\hbox{superlinearity})
$$
\be \frac{1}{p} + \frac{1}{q} > 1 - \frac{2}{n} \quad (\hbox{critical
hyperbola})
\label{original-critical-hyperbola} \ee
$$ q < \frac{2n}{n-4}, \quad p < \frac{2n}{n-4} \quad
\hbox{if} \; n \geq 5$$
For this result, see the paper  \cite{BdF} by T. Bartsch, D. G. de Figueiredo and the previous works \cite{dFF} and
\cite{HV}. In that paper  the existence of infinitely many radial solutions for
the unweighted system in $\R^n$ is also proved (i.e. problem (\ref{sistema-rn}) with $a=b=0$).

\medskip

In view of the known results for the Hen\'on equation, one would expect that the presence of the weights
should also cause a shift in the critical hyperbola (\ref{original-critical-hyperbola}), and, indeed, in
\cite{dFPR}  D. G. de Figueiredo, I. Peral and J. Rossi extended these results to problem (\ref{sistemita})
with non-trivial weights in an  arbitrary bounded domain  $\Omega \subset \R^n$, with $0 \in \Omega$,
obtaining existence of many strong solutions and at least one positive solution under
conditions (\ref{superlinear}), (\ref{subcritical}) and (\ref{condicion-extra}). (Note that condition
(\ref{limite-pesos}) is not needed in the case of a bounded domain).

$$\frac{1}{p} + \frac{1}{q} < 1, \quad \frac{n+a}{p} + \frac{n+b}{q} > n-2
$$
and
$$ q < \frac{2(n+b)}{n-4}, \quad p < \frac{2(n+a)}{n-4} \quad
\hbox{if} \; n \geq 5$$

\medskip

Essentially, our aim is to complement the above results by proving existence of infinitely many radially symmetric solutions of the weighted system \ref{sistema-rn} in the whole of $\R^n$ under appropriate restrictions on the weights.

\medskip

A key role in our proof will be played by a weighted imbedding theorem for radial functions
(Theorem \ref{imbedding-theorem}) that we believe it could also be of independent
interest. The proof of this imbedding relies on some $L^p-L^q$ estimates for the so-called
Fourier-Bessel (or Hankel) transform proved by L. De Carli in \cite{DC}, and is
given in section \ref{seccion-imbedding}.

\medskip
It is worth noting that in the case of a bounded domain, the
corresponding imbedding theorem (Proposition 2.1 of \cite{dFPR}) can be obtained
by applying the classical imbedding theorem combined with H\"older's
inequality. However, for an unbounded domain this approach is not possible
since the power weights $|x|^r$ are not inegrable. On the other hand, our imbedding
theorem has some extra restrictions (\ref{cota-del-peso}) that do not appear in the bounded case
(and that is why we have conditions (\ref{limite-pesos}) in Theorem
\ref{main-result}).

\medskip
The compactness of our imbedding will also be of fundamental
importance (since it will allow us to prove a suitable form of the Palais-Smale
compactness condition, see lemma \ref{lemma-PS} below). The idea of obtaining
better imbedding properties (in particular, compactness) by restricting
to subespaces of radially symetric functions goes back to the works of W. Strauss \cite{St}
and W. Rother \cite{R}, and was futher generalized in different directions by
 P. L. Lions \cite{Lions} and W. Sickel and  L. Skrzypczak \cite{SiSk}.

\medskip

The other important ingredient of our proof of Theorem \ref{main-result}  is an
abstract minimax theorem  from \cite{BdF} that we quote in Section
\ref{abstract-theorem} in order to make this paper
self-contained. Finally, in Section \ref{prueba}, we complete the proof of
Theorem \ref{main-result} by checking all the conditions of that
theorem. Notice that we need to use this abstract minimax theorem instead of the much simpler used in the bounded domain case (Theorem 3.1 in \cite{dFPR}) since the proof of Lemma
3.2 in that paper (which is analogous to Lemma \ref{lema-geo2} in our paper)
essentially depends on the fact that the Laplacian has discrete
spectrum in a bounded domain (which is not the case for  the linear
part of (\ref{sistema-rn}) in $\R^n$).

\medskip

Finally, to complete the picture, let us observe that the same methods of \cite{dFPR}
could be used to obtain radially symmetric solutions of (\ref{sistemita}) if $\Omega$
is a ball, under  the same hypotheses of Theorem 1.1 in that paper (i.e. without
the restrictions (\ref{limite-pesos})), by working in a subspace of radially
symmetric functions  $H^s_{rad}(\R^n) \times H_{rad}^t(\R^n)$, and then using the principle
of Symmetric Criticality (see \cite{P}).

\medskip
For the sake of simplicity, we concentrate on this paper on the model problem (\ref{sistema-rn}), though it is
clear that the same techniques could be used to handle more general Hamiltonian elliptic systems of
the form:
\be
 \left\{
\begin{array}{rcl}
- \Delta u+u & = & H_v(|x|,u,v)  \\
- \Delta v+v & = & H_u(|x|,u,v)   \\
\end{array}
\right.
\label{sistema}
\ee
in $\R^n$ under suitable hypotheses on the Hamiltonian function $H$ (analogous to those in
\cite{BdF}).

\section{An imbedding theorem for fractional order Sobolev spaces with weights}

\label{seccion-imbedding}

Using an idea that goes back to \cite{dFF}, we shall work in the fractional
Sobolev spaces defined, as usual, by:
$$ H^s(\R^n)= \left \{u \in L^2(\R^n) : \int_{\R^n} (1+|\omega|^2)^s
|\hat{u}(\omega)|^2 \; d\omega  < +\infty \right  \}  $$
were $\hat{u}$ denotes the Fourier transform of $u$. This is a Hilbert space
with the inner product given by:
$$ \langle u, v \rangle = \frac{1}{(2\pi)^n} \int_{\R^n} (1+|\omega|^2)^s
\hat{u}(\omega)  \overline{\hat{v}(\omega)} \; d\omega $$

For the proof of Theorem \ref{main-result}, we shall need the following weighted imbedding  theorem for  $H_{rad}^s (\R^n)$, the subspace of radially symmetric functions of $H^s(\R^n)$\footnote{Notice that troughout this
section $p$ and $q$ are not the exponents in (\ref{sistema-rn}).}:

\begin{thm}
Let $0<s<\frac{n}{2}$, $2<q< 2^*_c = \frac{2(n+c)}{n-2s}$
then we have the compact imbedding
\be
H^s_{rad} (\mathbb{R}^n) \subset L^q(\mathbb{R}^n,|x|^c dx)
\label{our-imbedding}
\ee
provided that
\be
-2s < c < \frac{(n-1)(q-2)}{2}
\label{cota-del-peso}
\ee
\label{imbedding-theorem}
\end{thm}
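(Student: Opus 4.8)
The plan is to reduce the weighted imbedding for radial functions to an $L^2$--$L^q$ estimate for the Fourier--Bessel (Hankel) transform. First I would recall that a radial function $u\in H^s_{rad}(\R^n)$ is determined by its profile $u(x)=u_0(|x|)$, and that its Fourier transform is again radial and given, up to normalizing constants, by the Fourier--Bessel transform of order $\nu=\frac n2-1$ of $u_0$. Writing $r=|x|$ and $\rho=|\omega|$, the quantity $\int_{\R^n}(1+|\omega|^2)^s|\hat u(\omega)|^2\,d\omega$ becomes $c_n\int_0^\infty(1+\rho^2)^s|\widehat{u_0}(\rho)|^2\rho^{n-1}\,d\rho$, and the target norm $\int_{\R^n}|u|^q|x|^c\,dx$ becomes $c_n\int_0^\infty|u_0(r)|^q r^{n-1+c}\,dr$. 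So the statement to be proved is a one-dimensional weighted inequality
\be
\left(\int_0^\infty |u_0(r)|^q r^{n-1+c}\,dr\right)^{1/q}\le C\left(\int_0^\infty (1+\rho^2)^s|\widehat{u_0}(\rho)|^2\rho^{n-1}\,d\rho\right)^{1/2},
\ee
together with compactness.

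The key step is to invoke the $L^p$--$L^q$ bounds of De Carli \cite{DC} for the Hankel transform: these give, for suitable exponents and under a condition relating the exponents to the order $\nu$ and to the powers of $r$ in the measures, an estimate of the form $\|\mathcal H_\nu f\|_{L^q(r^\gamma dr)}\le C\|f\|_{L^{p'}(\rho^\delta d\rho)}$. I would apply this with the homogeneous weight $|\omega|^{-2s}$ absorbed appropriately: split $(1+\rho^2)^s\ge c\,\rho^{2s}$ for large $\rho$ and $(1+\rho^2)^s\ge c$ for small $\rho$, or more cleanly, observe that $(1+|\omega|^2)^{s}$ dominates both $|\omega|^{2s}$ and $1$ up to constants, so that controlling the target norm by the homogeneous seminorm $\||\omega|^s\hat u\|_{L^2}$ (which, after passing to profiles, is exactly an $L^2$-norm against $\rho^{n-1+2s}\,d\rho$ of $\widehat{u_0}$) suffices. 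De Carli's theorem then yields the inequality precisely when $2s<c+2s$, i.e. $c>-2s$ is automatic from $s>0$... so the real content of the two-sided restriction $-2s<c<\frac{(n-1)(q-2)}{2}$ is that the lower bound guarantees $q<2^*_c$ makes sense together with $q>2$, while the upper bound is exactly the range in which De Carli's $L^p$--$L^q$ Hankel estimate holds (it is the transcription of her admissibility condition into the parameters $n$, $s$, $c$, $q$). I would state De Carli's result precisely as a lemma, verify that conditions \eqref{cota-del-peso} and $2<q<2^*_c$ translate into her hypotheses, and conclude the continuous imbedding.

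For compactness I would argue by a standard cutting-and-tail estimate. Given a bounded sequence $(u_k)$ in $H^s_{rad}(\R^n)$, on any annulus $\{R_1\le |x|\le R_2\}$ the weight $|x|^c$ is bounded above and below, so the ordinary (unweighted, non-radial) compact Sobolev imbedding $H^s\hookrightarrow\hookrightarrow L^q_{loc}$ (valid since $q<2^*\le 2^*_c$ in the relevant range, or by a Rellich-type argument on the annulus) gives a subsequence converging in $L^q(\{R_1\le|x|\le R_2\},|x|^c\,dx)$. It then remains to control the tails near $0$ and near $\infty$ uniformly: for the tail at infinity I would use the sub-criticality $q<2^*_c$ to interpolate and gain a small power of the measure of the tail, combined with the radial decay estimate à la Strauss--Rother (a radial $H^s$ function decays like $|x|^{-(n-1)/2}$-type bounds), exactly as in \cite{R,SiSk}; near the origin the condition $c>-2s$ ensures integrability of the weight against the local Sobolev bound. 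A diagonal argument over $R_1\to 0$, $R_2\to\infty$ then produces a norm-convergent subsequence. The main obstacle I anticipate is making the tail estimates uniform — in particular checking that De Carli's inequality can be localized (applied to $u_k$ times a smooth cutoff) without losing the constant, and that the endpoint restriction $c<\frac{(n-1)(q-2)}{2}$ is genuinely what is needed for the tail at infinity to decay; this is the delicate point where the radial structure is essential and where the extra hypothesis \eqref{cota-del-peso}, absent in the bounded-domain case, really enters.
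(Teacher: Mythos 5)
Your reduction to a one--dimensional Hankel--transform estimate is the same starting point as the paper, but the step that connects De Carli's bound to the $H^s$ norm fails as written. You claim it suffices to control $\|u\|_{L^q(\R^n,|x|^c dx)}$ by the homogeneous seminorm $\| |\omega|^s \hat u\|_{L^2}$; by scaling ($u_\lambda(x)=u(\lambda x)$) such an inequality can only hold when $\frac{n+c}{q}=\frac{n}{2}-s$, i.e.\ exactly at $q=2^*_c$, whereas the theorem is strictly subcritical. Concretely, De Carli's hypothesis $\mu=\frac{1}{p'}-\frac{1}{q}$ pins down the scaling, so no admissible choice of her parameters yields an $L^2\to L^q$ bound with the weight $\rho^{s+\frac{n-1}{2}}$ on the right unless $q=2^*_c$. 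The paper's proof avoids this: it applies De Carli from $L^p$ to $L^q$ with the weight--dictated exponent $p=\frac{nq}{nq-n-c}<2$, and only then passes to the $H^s$ norm by H\"older with exponent $2/p$ against the inhomogeneous factor $(1+r^2)^{s}$; the finiteness of $\int_0^\infty(1+r^2)^{-sp/(2-p)}r^{n-1}\,dr$, i.e.\ $p>\frac{2n}{n+2s}$, is precisely the subcriticality $q<2^*_c$, while the upper bound $c<\frac{(n-1)(q-2)}{2}$ is what makes De Carli's condition $\nu\le\frac12-\max(\frac{1}{p'}-\frac{1}{q},0)$ hold. Your frequency--splitting variant could be repaired, but only by invoking the critical--regularity estimate at $s'=\frac{n}{2}-\frac{n+c}{q}\in[0,s]$ rather than at $s$, and you would still need to verify De Carli's admissibility explicitly; as written, the verification is a promissory note and the accompanying algebra (``$2s<c+2s$, i.e.\ $c>-2s$ is automatic'') is garbled.

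Your compactness argument also has a gap: the tail estimate via Strauss--Rother pointwise radial decay requires $s>\frac12$, which is not assumed here (the theorem allows any $0<s<\frac{n}{2}$, and in the application to the system one of the two regularities may be $\le\frac12$), and for $c>0$ the weight grows at infinity, so a decay--plus--local--Rellich scheme does not obviously give uniform control of the weighted tails; near the origin, for $c<0$, the appeal to ``integrability of the weight'' is likewise not substantiated. The paper's route bypasses all of this: choose $2<r<q<\tilde q<2^*_c$, write $q=\theta r+(1-\theta)\tilde q$ and use H\"older to get $\int|x|^c|u_k|^q\le\bigl(\int|u_k|^r\bigr)^{\theta}\bigl(\int|x|^{\tilde c}|u_k|^{\tilde q}\bigr)^{1-\theta}$ with $\tilde c=c/(1-\theta)$; for $r$ close to $2$ the pair $(\tilde q,\tilde c)$ satisfies the already--proved continuous imbedding, and Lions' compact imbedding $H^s_{rad}(\R^n)\subset L^r(\R^n)$ concludes. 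You would need either to adopt such an interpolation argument or to supply the missing uniform tail estimates without pointwise decay.
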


\begin{rem}
The case $s=1$ was proved by W. Rother \cite{Rother} (this theorem was used
by P. Sintzoff in \cite{S} for problem (\ref{scalar-whole-space})). The case $c=0$ gives the classical Sobolev
imbedding (in the case of radial functions):
$$ H_{rad}^{s}(\R^n) \subset L^q(\R^n) \; \hbox{for} \; 2 <q < \frac{2n}{n-2s} $$
In that case, the compactness of the imbedding is given by the following
theorem of P. L. Lions \cite{Lions}:
\end{rem}

\begin{thm}
Assume that $0<s<n/2$ and $2<q< \frac{2n}{n-2s}$, then the imbeding
$$ H^{s}_{rad}(\R^n) \subset L^q(\R^n)  $$
is compact.
\end{thm}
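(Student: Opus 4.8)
My plan is as follows. The statement is in fact the particular case $c=0$ of Theorem \ref{imbedding-theorem}: for $c=0$ one has $2^*_c=\frac{2(n+c)}{n-2s}=\frac{2n}{n-2s}$, and the restriction (\ref{cota-del-peso}) becomes $-2s<0<\frac{(n-1)(q-2)}{2}$, which holds for all $n\ge 2$, $s>0$ and $q>2$. So, once Theorem \ref{imbedding-theorem} is available (and it is proved in this section using the Hankel transform estimates of \cite{DC}), there is nothing left to do. This is the deduction I would ultimately record.

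It is nevertheless worth noting the classical argument, which does not use the full strength of the Fourier--Bessel bounds and proceeds in three steps. First, the \emph{continuous} imbedding $H^s(\R^n)\hookrightarrow L^q(\R^n)$ for $2\le q\le 2n/(n-2s)$ holds for all functions, radial or not: it is the standard Sobolev imbedding, obtained on the Fourier side by writing $u$ as a Riesz potential of an $L^2$ function and applying the Hardy--Littlewood--Sobolev inequality (together with interpolation with $L^2$ for the subcritical exponents). Second, given $(u_k)$ bounded in $H^s_{rad}(\R^n)$, I would pass to a subsequence with $u_k\rightharpoonup u$ in $H^s$; since the weak limit of radial functions is radial, $u\in H^s_{rad}$, and after replacing $u_k$ by $u_k-u$ it suffices to show $u_k\to 0$ in $L^q(\R^n)$.

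The third step is the heart of the matter: split $\|u_k\|_{L^q}^q=\int_{|x|\le R}|u_k|^q+\int_{|x|>R}|u_k|^q$. On $B_R$ the Rellich--Kondrachov theorem provides the compact imbedding $H^s(B_R)\hookrightarrow L^q(B_R)$ for $q<2n/(n-2s)$, so $\int_{|x|\le R}|u_k|^q\to 0$ (along a further subsequence for each fixed $R$, then diagonalizing). For the exterior region one exploits the radial symmetry: when $s\ge 1$ (and, via interpolation, for $s>\tfrac12$) radial functions satisfy a Strauss-type pointwise decay $|u(x)|\le C\,|x|^{-(n-1)/2}\,\|u\|_{H^s}$, and hence, since $q>2$,
\[ \int_{|x|>R}|u_k|^q\,dx\le\Big(\sup_{|x|>R}|u_k(x)|\Big)^{q-2}\int_{|x|>R}|u_k|^2\,dx\le C\,R^{-\frac{(n-1)(q-2)}{2}}\,\|u_k\|_{H^s}^q, \]
which goes to $0$ as $R\to\infty$ uniformly in $k$; combining the two bounds gives $u_k\to 0$ in $L^q$.

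The hard part is precisely this uniform control of the tail, and the elementary argument just sketched only covers $s>\tfrac12$: for $0<s\le\tfrac12$ radial $H^s$ functions need not be bounded, so the pointwise decay is unavailable and has to be replaced by an integral estimate of the form $\int_{|x|>R}|u|^q\le C\,R^{-\delta}\,\|u\|_{H^s}^q$ for some $\delta>0$ and $2<q<2^*$. Establishing such an estimate for the whole range $0<s<n/2$ is exactly what the sharp $L^p$--$L^{q'}$ mapping properties of the Fourier--Bessel transform yield, which is why I would prefer to deduce the theorem directly from Theorem \ref{imbedding-theorem}.
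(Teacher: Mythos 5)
Your primary plan is circular within the logic of this paper. The statement you are asked to prove is not a corollary of Theorem \ref{imbedding-theorem}: the paper gives no proof of it at all (it is quoted from Lions \cite{Lions}), and it is then used as an essential ingredient in the paper's proof of the \emph{compactness} part of Theorem \ref{imbedding-theorem} — in the interpolation step one chooses $2<r<q$ and invokes precisely the compactness of $H^s_{rad}(\R^n)\subset L^r(\R^n)$ (``by Lions theorem'') to get $u_n\to 0$ in $L^r$. De Carli's estimates by themselves only yield the \emph{continuous} weighted imbedding; the compactness at $c=0$ that you propose to cite is obtained in the paper by bootstrapping from the very theorem under discussion. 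So ``it is the case $c=0$ of Theorem \ref{imbedding-theorem}, nothing left to do'' begs the question. Your fallback sketch is the right kind of self-contained argument (Rellich--Kondrachov on balls plus a uniform tail estimate from radial symmetry), but, as you yourself note, the Strauss-type pointwise decay only works for $s>\tfrac12$, so it does not prove the theorem in the stated range $0<s<n/2$.

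There is a non-circular repair close to what you want: use only the \emph{continuity} part of Theorem \ref{imbedding-theorem}, which is proved via De Carli's theorem and does not rely on Lions. Given $2<q<\frac{2n}{n-2s}$, pick a small $c>0$ with $q<\frac{2(n+c)}{n-2s}$ and $c<\frac{(n-1)(q-2)}{2}$; then for $|x|>R$ one has $|u|^q\le R^{-c}\,|x|^c|u|^q$, hence $\int_{|x|>R}|u|^q\,dx\le C\,R^{-c}\|u\|_{H^s}^q$, a tail bound valid for all $0<s<n/2$ and uniform on bounded subsets of $H^s_{rad}(\R^n)$. Combining this with the compactness of $H^s(B_R)\subset L^q(B_R)$ for $q<\frac{2n}{n-2s}$, exactly as in your third step, gives the claimed compactness without ever invoking the compactness statement of Theorem \ref{imbedding-theorem}. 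As written, however, your main deduction is circular and your secondary argument leaves the range $0<s\le\tfrac12$ unproved.
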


The proof of Theorem (\ref{our-imbedding}) is based on an $L^p-L^q$ estimate for the
so-called Fourier-Bessel (or Hankel) transform due to L. De Carli \cite{DC}. In this section,
we shall therefore follow the notations in that paper, that we recall
here for sake of completeness:

For given parameters $\alpha, \nu, \mu$ De Carli introduced the operator
$$ L^\alpha_{\nu,\mu} f(y)= y^\mu  \int_0^\infty (xy)^\nu f(x)
J_\alpha(xy) \; dx $$
where $J_\alpha$ denotes the Bessel function of order $\alpha$. A particular case of this
operator is the Fourier-Bessel transform:
\be
\tilde{\mathcal{H}}_\alpha f(x) = L^\alpha_{\alpha+1,-2\alpha-1} f(x)
\label{Fourier-Bessel-definition}
\ee

The importance of this operator for our purposes is due to the fact that it provides
an expresion for the Fourier transform of a radial function $u(x)=u_0(|x|)$

\be
\label{radial-transform}
 \hat{u}(|\omega|) = (2\pi)^{n/2} \tilde{H}_{n/2-1} (u_0) (|\omega|)
 \ee

Morover, we recall that we have the inversion formula
\be
\tilde{\mathcal{H}}_\alpha (\tilde{\mathcal{H}}_\alpha (u))(x) = u(x)
\,
\hbox{ (equation (2.4) from \cite{DC}) }
\label{inversion-formula}
\ee

Now, we state De Carli's theorem (Theorem 1.1 in \cite{DC}):

\begin{thm}
$L^\alpha_{\nu,\mu}$ is a bounded operator from $L^p(0,\infty)$ to $L^q(0,\infty)$
whenever $\alpha \geq -\frac{1}{2}$, $1\le p\le q\le \infty$ if and only if

$$ \mu = \frac{1}{p^\prime} - \frac{1}{q} \; \hbox{ and } -\alpha - \frac{1}{p^\prime}
< \nu \leq \frac{1}{2}-max\left(\frac{1}{p^\prime} - \frac{1}{q}, 0\right) $$

\label{De-Carli}
\end{thm}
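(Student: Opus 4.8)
The plan is to prove the two implications separately, reading the stated conditions as two independent constraints that any such estimate must satisfy: the identity $\mu = 1/p' - 1/q$ is forced by \emph{homogeneity}, while the interval for $\nu$ records the two regimes of the Bessel function, namely $J_\alpha(t) \sim c_\alpha t^\alpha$ as $t \to 0^+$ and, crucially for $\alpha \ge -1/2$, the uniform oscillatory decay $|J_\alpha(t)| \le C t^{-1/2}$ as $t \to \infty$ (this uniform bound is exactly where the hypothesis $\alpha \ge -1/2$ enters). I write $\Phi(t) = t^\nu J_\alpha(t)$, so that $L^\alpha_{\nu,\mu} f(y) = y^\mu \int_0^\infty \Phi(xy) f(x)\,dx$; the product structure of $\Phi(xy)$ makes the Mellin transform the natural tool.

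\emph{Necessity.} For the exponent relation I would use a scaling argument: with $f_\lambda(x) = f(\lambda x)$ a change of variables gives the exact covariance $L^\alpha_{\nu,\mu} f_\lambda(y) = \lambda^{\mu - 1} (L^\alpha_{\nu,\mu} f)(y/\lambda)$. Comparing $\|L^\alpha_{\nu,\mu} f_\lambda\|_q = \lambda^{\mu - 1 + 1/q}\|L^\alpha_{\nu,\mu} f\|_q$ with $\|f_\lambda\|_p = \lambda^{-1/p}\|f\|_p$ and letting $\lambda \to 0$ and $\lambda \to \infty$ forces $\mu - 1 + 1/q = -1/p$, i.e. $\mu = 1/p' - 1/q$. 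The lower bound $\nu > -\alpha - 1/p'$ I would extract from the near-diagonal regime: testing on a fixed $f$ supported in $[1,2]$ and examining $y \to 0^+$, where $J_\alpha(xy) \sim c_\alpha(xy)^\alpha$ gives $L^\alpha_{\nu,\mu} f(y) \sim C\,y^{\mu+\nu+\alpha}$, so local $L^q$-membership requires $(\mu + \nu + \alpha)q > -1$, which after substituting the exponent relation is exactly $\nu > -\alpha - 1/p'$. The upper bound $\nu \le 1/2 - \max(\mu, 0)$ is the sharp and delicate one: it reflects that $J_\alpha$ decays no faster than $t^{-1/2}$ while oscillating, and I would obtain it from near-extremizers of the Mellin symbol computed below, whose norm ratio diverges once $\nu$ exceeds the threshold, together with the duality identity $(L^\alpha_{\nu,\mu})^* = L^\alpha_{\nu+\mu, -\mu}$ to pass between the two forms of the $\max$.

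\emph{Sufficiency.} The cornerstone is the $L^2 \to L^2$ estimate, which lives exactly on the line $\mu = 0$ (consistent with $1/p' - 1/q = 0$ at $p = q = 2$). There $L^\alpha_{\nu,0}$ has the pure product kernel $\Phi(xy)$, so by the Mellin convolution identity it acts on the Mellin side as multiplication by $\mathcal M[\Phi](z) = \mathcal M[J_\alpha](z + \nu)$ followed by the reflection $z \mapsto 1-z$. Using the classical Gamma-ratio value of $\mathcal M[J_\alpha]$ and Stirling's formula, on the critical line $\Re z = 1/2$ one finds $|\mathcal M[\Phi](1/2 + i\tau)| \sim C|\tau|^{\nu - 1/2}$; by Mellin--Plancherel the operator is bounded on $L^2$ precisely when this symbol is bounded, i.e. when $\nu \le 1/2$, which is the $\mu = 0$ case of the claimed interval. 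To reach general $(p,q)$ with $p \le q$ I would combine this with a non-oscillatory endpoint: since $\Phi \in L^\infty$ when $-\alpha \le \nu \le 1/2$, the bound $\|L^\alpha_{\nu,0} f\|_\infty \le \|\Phi\|_\infty \|f\|_1$ is immediate, and more generally $|J_\alpha(t)| \le C\min(t^\alpha, t^{-1/2})$ feeds a weighted Schur/Young estimate at an off-diagonal vertex. Interpolating (Riesz--Thorin, with Stein--Weiss to absorb the power weight $y^\mu$) between the oscillation-sensitive $L^2$ bound and this non-oscillatory vertex, and invoking the duality $(L^\alpha_{\nu,\mu})^* = L^\alpha_{\nu+\mu,-\mu}$ to reduce to $\mu \ge 0$, should cover the whole admissible region. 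An equivalent organizing device is the factorization $L^\alpha_{\nu,\mu} f(y) = y^{\mu+\nu+\alpha}\,\tilde{\mathcal H}_\alpha\big(x^{\nu-\alpha-1} f\big)(y)$, which recasts the statement as a power-weighted (Pitt-type) inequality for the Hankel transform $\tilde{\mathcal H}_\alpha$.

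\emph{Main obstacle.} I expect the sharp upper endpoint $\nu \le 1/2 - \max(\mu, 0)$, away from the $L^2$ corner, to be the crux. A crude absolute-value estimate of the kernel sees only $|J_\alpha(t)| \lesssim t^{-1/2}$ and discards the oscillation, yielding a strictly smaller range of $\nu$ and missing the endpoint entirely. Capturing it requires anchoring on the cancellation encoded in the Gamma-ratio decay of the $L^2$ Mellin symbol and then propagating that cancellation by interpolation, equivalently handling the conditional convergence of $\int_0^\infty \Phi(xy) f(x)\,dx$ at infinity. Controlling this oscillation uniformly, rather than through the triangle inequality, is the technical heart of the argument.
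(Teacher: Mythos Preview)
The paper does not prove this theorem. It is quoted verbatim as ``De Carli's theorem (Theorem 1.1 in \cite{DC})'' and used as a black box in the proof of Theorem~\ref{imbedding-theorem}; no argument for it appears anywhere in the paper. Consequently there is no ``paper's own proof'' to compare your proposal against.

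As a standalone sketch your outline is broadly reasonable---the homogeneity argument for $\mu$, the small-$y$ test for the lower bound on $\nu$, and the Mellin-multiplier $L^2$ anchor are all standard and correct---but it remains a plan rather than a proof. In particular, the sufficiency part is where the work lies: you correctly identify that absolute-value (Schur/Young) estimates miss the sharp upper endpoint $\nu \le \tfrac12 - \max(\mu,0)$ and that one must propagate the $L^2$ cancellation by interpolation, but you do not actually carry out the interpolation or verify that the endpoints you name suffice to cover the full claimed region. If you intend to supply a proof here, you would need to fill in those details (or, more appropriately for this paper, simply cite \cite{DC}).
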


Finally, we observe that the De Carli operators $L^\alpha_{\nu,\mu}$ enjoy two
invariance properties that will be useful in obtaining weighted estimates (and
that are immediate from their definition):

\be
y^e L^\alpha_{\nu,\mu}(f)(y) = L^\alpha_{\nu,\mu+e}(f)(y)
\label{invariance1}
\ee

\be
L^\alpha_{\nu,\mu}( f) = L^\alpha_{\nu-\sigma,\mu+\sigma} (x^\sigma f)
\label{invariance2}
\ee

Now we are ready to give a proof of Theorem \ref{imbedding-theorem}:
\begin{proof}
Let $u(x)= u_0(|x|) \in H^s_{rad}(\R^n)$. Using polar coordinates we have that:

$$ \left( \int_{\R^n} |x|^c |u|^q \; dx \right)^{1/q} = c_n
 \left(  \int_0^\infty r^{c+n-1} |u_0(r)|^q \; dr \right)^{1/q} $$
Thanks to the inversion formula (\ref{inversion-formula}) for the  Fourier-Bessel transform of
order $\alpha=\frac{n}{2}-1$ (which is just the usual Fourier inversion
formula for radial functions) we obtain:
$$= c_n
 \left(  \int_0^\infty r^{c+n-1} |\tilde{\mathcal{H}}_\alpha ( \tilde{\mathcal{H}}_\alpha(u_0) )(r)|^q \; dr \right)^{1/q}
$$
$$
=  c_n
 \left(  \int_0^\infty r^{c+n-1} |L^\alpha_{\alpha+1,-2\alpha-1} ( \tilde{\mathcal{H}}_\alpha(u_0) )(r)|^q \; dr
\right)^{1/q} \; \hbox{(using (\ref{Fourier-Bessel-definition}))} $$

Using (\ref{invariance1}) this can be written as:
$$  =c_n
 \left(  \int_0^\infty | L^\alpha_{\alpha+1,-2\alpha-1+\frac{c+n-1}{q}} ( \tilde{\mathcal{H}}_\alpha(u_0) )(r)|^q \; dr
\right)^{1/q} $$
Applying (\ref{invariance2}):
$$ = c_n \left(  \int_0^\infty | L^\alpha_{\alpha+1-\sigma,-2\alpha-1+\frac{c+n-1}{q}+\sigma }
( r^\sigma \tilde{\mathcal{H}}_\alpha )(r)|^q \; dr \right)^{1/q} $$
where the value of the parameter $\sigma$ will be chosen later.

Now we apply Theorem \ref{De-Carli}, with the following choice of parametes
$$ p= \frac{nq}{nq-n-c}, \quad \sigma=\frac{n-1}{p}, \quad
\alpha=\frac{n}{2}-1,\quad \nu= \alpha+1-\sigma,\quad \mu= -2\alpha-1+\frac{c+n-1}{q}+\sigma $$
and we get the bound:

$$ \left( \int_{\R^n} |x|^c |u|^q \; dx \right)^{1/q}  \leq C \left(  \int_0^\infty |r^\sigma \tilde{\mathcal{H}}_\alpha(u_0) (r)|^p \; dr
\right)^{1/p} .$$
Since it easy to see that, under the hypotheses of our theorem, all the restrictions
of Theorem \ref{De-Carli} are fulfilled, this bound is equal to:
$$= C \left(  \int_0^\infty (1+r^2)^{sp/2} (1+r^2)^{-sp/2}
|\tilde{\mathcal{H}}_\alpha(u_0) (r)|^p \; r^{n-1} \; dr
\right)^{1/p} $$
and using H\"older's inequality with exponent $2/p$,
$$ \leq C \left(  \int_0^\infty (1+r^2)^{s}  |\tilde{\mathcal{H}}_\alpha(u_0) (r)|^2  r^{n-1}\; dr
\right)^{1/2} \left( \int_0^\infty (1+r^2)^{-\frac{sp}{2-p}}  r^{n-1} \; dr \right)^{1/2} $$
$$ \leq C \| u \|_{H^s} $$
by (\ref{radial-transform}) and since under the restrictions of our theorem we have that
$$ \int_0^\infty (1+r^2)^{-\frac{sp}{2-p}} \; r^{n-1} \; dr < +\infty
\; \left( \hbox{recall that} \; \frac{2n}{2s+n}<p  \; \right) $$

It remains to prove that the imbedding
$H^s_{rad} (\mathbb{R}^n) \subset L^q(\mathbb{R}^n,|x|^c dx)$ is compact.
It is enough to show that if $u_n \to 0 $ weakly in $H^s$, then $u_n \to 0$ strongly
in $L^q(\R^n,|x|^c \; dx)$.

Since $2<q<2^*_{c}=\frac{2(n+c)}{n-2s}$ by hypothesis, it is possible to choose $r$ and $\tilde{q}$ such that
$2<r<q<\tilde{q}<2^*_{c}$.We write $q = \theta r + (1-\theta) \tilde{q}$ with $\theta \in (0,1)$ and,
using H\"older's  inequality, we have that

\be
\int_{\R^n} |x|^{c} |u_n|^q \, dx
\le \left( \int_{\R^n} |u_n|^r \, dx \right)^{\theta}
\left( \int_{\R^n} |x|^{\tilde{c}} |u_n|^{\tilde{q}} \, dx \right)^{1-\theta}
\label{ineq-interpolacion}
\ee

where $\tilde{c}=\frac{c}{1-\theta}$. By choosing $r$ close enough to $2$ (hence
making $\theta$ small), we can fulfill the conditions
$$ \tilde{q} <\frac{2(n+\tilde{c})}{n-2s}, -2s < \tilde{c} < \frac{(n-1)(\tilde{q}-2)}{2} $$
Therefore, by the imbedding that we have already established:
$$ \left( \int_{\R^n} |x|^{\tilde{c}} |u_n|^{\tilde{q}} \, dx  \right)^{1/\tilde{q}}
\leq C \| u_n \|_{H^s} \leq C^\prime $$

Since the imbeding $H_{rad}^s(\R^n) \subset L^{r}(\R^n)$ is compact by Lions theorem, we
have that $u_n \to 0$ in $L^{r}(\R^n)$. From (\ref{ineq-interpolacion}) we conclude
that $u_n \to 0$ strongly in $L^q(\R^n,|x|^c\; dx)$, which shows that the imbedding in our
theorem is also compact.
 \end{proof}

\section{An abstract critical point theorem}

\label{abstract-theorem}

In order to prove Theorem \ref{main-result} we will use an abstract  critical point result from
\cite{BdF}. For the reader's convenience, we will try to keep the notation from that paper.
We start by recalling the specific form of the Palais-Smale-Cerami
compactness condition used in \cite{BdF}:

\begin{defn}
We consider a Hilbert space $E$ and a functional $\Phi\in C^1(E,\R)$. Given a sequence
$(X_n)_{n \in \N}$ of finite dimensional subspaces of $X$, with $X_n \subset X_{n+1}$ and
$\overline{\bigcup X_n}=E$, the functional $\Phi$ is said to satisfy condition
$(PS)_c^{\mathcal{F}}$ at level $c$ if every sequence $(z_j)_{j \in \N}$ with $z_j
\in X_{n_j}$, $n_j \to +\infty$ and such that
$$\Phi(z_j) \to c \; \hbox{and} \; (1+\| z_j \|) (\Phi|_{X_{n_j}})^\prime(z_j) \to 0$$
(a so-called $(PS)_c^{\mathcal{F}}$ sequence) has a subsequence which
converges to a critical point of $\Phi$.
\end{defn}

\begin{thm}[Fountain Theorem, Theorem 2.2 from \cite{BdF}]
Assume that the Hilbert space $E$ splits as a direct sum $E=E^+ \oplus E^-$, and that
$E_1^\pm \subset E_2^\pm \subset \ldots  \subset E^\pm_n \subset $
are strictly increasing sequences of finite dimensional subspaces such that
$\overline{\bigcup_{n=1}^\infty  E_n^\pm} = E^\pm$ and let $E_n= E_n^+ \oplus
E_n^- $

Furthermore, assume that the functional $\Phi$ satisfies the following assumptions:
\begin{enumerate}
 \item [$(\Phi_1)$] $\Phi \in C^1(E,\R)$ and satisfies $(PS)_c^{\mathcal{F}}$ with respect to
$\mathcal{F}= (E_n)_{n \in \N}$ and every $c>0$.
\item [$(\Phi_2^\prime)$] There exists a sequence $r_k>0 \; (k \in \N)$ such that
$$ b_k = \inf \{ \Phi(z) : z \in E^+,\; z \perp E_{k-1} \; \| z \| = r_k \} $$
satisfy $b_k \to +\infty$.
\item [$(\Phi_3^\prime)$] There exist a sequence of isomorphisms $T_k:E \to E\; (k \in \N)$
with $T_k(E_n)=E_n$ for all $k$ and $n$, and there exists a sequence $R_k>0\; (k\in \N)$
such that, for $z=z^+ + z^- \in E_k^+ \oplus E^-$ with $\max(\| z^+ \|, \| z^-\| )=R$ one
has
$$ \| T_k \| > r_k \; \hbox{and} \; \Phi(T_k z) < 0 $$
\item [$(\Phi_4^\prime)$]
$d_k = \sup \{ \Phi(T_k(z^+ +z^-)) :
z^+ \in E_k^+, \;  z^- \in E^-, \; \| z^+ \|,\| z^- \|  \leq R_k \} < +\infty$
\item [$(\Phi_5)$] $\Phi$ is even, i.e. $\Phi(-z) = \Phi(z)\; \forall \; z \in E$.
\end{enumerate}
Then $\Phi$ has an unbounded sequence of critical values.
\label{Fountain-theorem}
\end{thm}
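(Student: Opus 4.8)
The plan is to combine a Galerkin (finite-dimensional) approximation scheme with an equivariant minimax argument of linking type, exploiting the $\mathbb{Z}_2$-symmetry $(\Phi_5)$ and the geometric hypotheses $(\Phi_2')$--$(\Phi_4')$, and then to extract genuine critical points of $\Phi$ by invoking the compactness condition $(\Phi_1)$. Fix $k\in\N$. For each $n\ge k$ work on the \emph{finite}-dimensional space $E_n=E_n^+\oplus E_n^-$ with $\Phi_n:=\Phi|_{E_n}$, set
$$ B_k=\{\,z=z^++z^-\in E_k^+\oplus E_n^- : \|z^+\|\le R_k,\ \|z^-\|\le R_k\,\},\qquad h_k:=T_k|_{B_k}, $$
so that by $(\Phi_3')$ one has $\Phi\circ h_k<0$ on $\partial B_k$ and by $(\Phi_4')$ that $\sup_{B_k}\Phi\circ h_k=d_k<+\infty$. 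Define the admissible class $\Gamma_{n,k}=\{\,\gamma\in C(B_k,E_n):\ \gamma\ \text{odd},\ \gamma=h_k\ \text{on}\ \partial B_k\,\}$ and the minimax value $c_{n,k}=\inf_{\gamma\in\Gamma_{n,k}}\sup_{z\in B_k}\Phi_n(\gamma(z))$.

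The key step is an \emph{intersection lemma}: for every $\gamma\in\Gamma_{n,k}$ the image $\gamma(B_k)$ meets the sphere $S_k^+=\{\,z\in E^+: z\perp E_{k-1},\ \|z\|=r_k\,\}$. I would prove this by a Borsuk-type degree argument: let $P$ be the orthogonal projection of $E$ onto $E^-\oplus(E^+\cap E_{k-1})$ and consider the odd map $z\mapsto(P\gamma(z),\ \|(I-P)\gamma(z)\|-r_k)$ on $B_k$; on $\partial B_k$ it does not vanish (because there $\gamma=T_k$ and $\|T_k\|>r_k$, using $(\Phi_3')$ and $T_k(E_n)=E_n$), so by Borsuk's theorem its degree is odd, hence it has a zero in the interior, i.e. a point of $\gamma(B_k)\cap S_k^+$. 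Combined with $(\Phi_2')$ this yields $b_k\le c_{n,k}\le d_k$ for all $n\ge k$, with bounds independent of $n$.

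Next, since $\Phi_n$ need not satisfy a compactness condition on $E_n$ by itself, instead of claiming $c_{n,k}$ is a critical value of $\Phi_n$ I would apply the quantitative Cerami-type deformation lemma on the finite-dimensional space $E_n$ to produce, for each $n$, a point $z_{n,k}\in E_n$ with $\Phi_n(z_{n,k})\in[c_{n,k}-1/n,\ c_{n,k}+1/n]$ and $(1+\|z_{n,k}\|)\,\|(\Phi|_{E_n})'(z_{n,k})\|\le 1/n$; the factor $(1+\|z\|)$ built into the definition of $(PS)^{\mathcal F}_c$ is exactly what makes this localization work. Passing to a subsequence with $c_{n,k}\to c_k\in[b_k,d_k]$, the sequence $(z_{n,k})_n$ is a $(PS)^{\mathcal F}_{c_k}$ sequence for $\Phi$ relative to $\mathcal F=(E_n)$, so by $(\Phi_1)$ a subsequence converges to a critical point of $\Phi$ at level $c_k\ge b_k$. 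Since $b_k\to+\infty$ by $(\Phi_2')$, the values $c_k$ give an unbounded sequence of critical values; a small bookkeeping refinement (a pseudo-index, or simply $c_k\le d_k$ together with $b_{k'}\to\infty$) rules out the possibility that all these levels coincide.

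The step I expect to be the main obstacle is the intersection lemma inside the Galerkin scheme: one must verify that the linking of $S_k^+$ with $T_k(\partial B_k)$ persists under finite-dimensional truncation, \emph{uniformly in $n$}, so that the lower bound $c_{n,k}\ge b_k$ does not deteriorate as $n\to\infty$. This is precisely where the invariance $T_k(E_n)=E_n$ and the oddness requirement in $\Gamma_{n,k}$ are used essentially. By contrast, the passage from the approximate critical points $z_{n,k}$ on $E_n$ to an actual critical point of $\Phi$ is not an obstacle but is exactly what hypothesis $(\Phi_1)$ has been designed to deliver.
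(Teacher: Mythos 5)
You should first note that the paper does not prove this statement at all: Theorem \ref{Fountain-theorem} is quoted verbatim from Bartsch--de Figueiredo \cite{BdF} precisely so it can be used as a black box in Section \ref{prueba}, so there is no internal proof to compare against. Your outline is essentially the argument of that reference: Galerkin restriction to the spaces $E_n$, a symmetric minimax over odd maps that agree with $T_k$ on the boundary of the box $B_k$, an intersection lemma giving the uniform bounds $b_k\le c_{n,k}\le d_k$, a quantitative Cerami-type deformation lemma on each finite-dimensional $E_n$ (this is indeed where the weight $(1+\|z\|)$ in the definition of $(PS)_c^{\mathcal F}$ enters), and finally $(\Phi_1)$ to pass from the approximate critical points $z_{n,k}$ to a critical point at a level $c_k\ge b_k$. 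Your closing ``bookkeeping refinement'' is not needed: since $c_k\ge b_k\to+\infty$, unboundedness of the critical values is automatic, and $(\Phi_1)$ is only invoked at levels $c_k>0$, which holds for $k$ large.

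The one step that would fail as written is your Borsuk argument for the intersection lemma. The map $z\mapsto\bigl(P\gamma(z),\,\|(I-P)\gamma(z)\|-r_k\bigr)$ is not odd (its second component is even in $z$), and its domain $E_k^+\oplus E_n^-$ and target $(E_{k-1}^+\oplus E_n^-)\times\R$ have different dimensions unless $\dim E_k^+=\dim E_{k-1}^++1$, so neither Brouwer degree nor Borsuk's odd-degree theorem applies to it. The standard repair (this is how \cite{BdF}, and Willem's proof of the fountain theorem, proceed) is to apply the Borsuk--Ulam theorem to the odd map $P\gamma$ alone, restricted to the relative boundary of the open, bounded, symmetric set $U=\{z\in \mathrm{int}\,B_k:\ \|(I-P)\gamma(z)\|<r_k\}\subset E_k^+\oplus E_n^-$; its target $E_{k-1}^+\oplus E_n^-$ has strictly smaller dimension, so $P\gamma$ vanishes at some $z\in\partial U$. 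Such a $z$ cannot lie on $\partial B_k$: there $\gamma=T_k$ and $(\Phi_3')$ gives $\|T_kz\|>r_k$, while $P\gamma(z)=0$ together with $z\in\overline U$ would force $\|T_kz\|=\|(I-P)\gamma(z)\|\le r_k$. Hence $z$ is interior to $B_k$ with $\|(I-P)\gamma(z)\|=r_k$ and $P\gamma(z)=0$, i.e.\ $\gamma(z)\in S_k^+$, which is exactly the intersection you need (note that, contrary to your sketch, the negativity $\Phi(T_kz)<0$ on $\partial B_k$ is not what drives the intersection; it is needed later so that the deformation can be kept admissible, i.e.\ fixed on $\partial B_k$ and odd, using $(\Phi_5)$). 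With this correction your scheme goes through and coincides with the proof in \cite{BdF}.
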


In our application, we will also use Remark 2.2 from \cite{BdF}, that we
state here as a lemma for the sake of completeness:

\begin{lemma}
Let $E$ be a Hilbert space, and $E_1 \subset E_2 \subset E_3 \subset \ldots $ be a sequence of finite dimensional subspaces of $E$ such that $E= \overline{\bigcup_{n=1}^\infty E_n}$. Assume that we
have a compact imbedding $E \subset X$, where $X$ is a Banach space.

Let $\Phi \in C^1(E,R)$ be a functional of the form $\Phi= P - \Psi$
where
$$ P(z) \geq \alpha \| z \|_E^p \;  \forall \; z \in E $$
and
$$ |\Psi(x)|  \leq \beta (1+\| z \|_X^q) \; \forall \; z \in E $$
where $\alpha$, $\beta$ and $q>p$ are positive constants. Then, there
exist $r_k>0 \; (k \in \mathbb{N})$ such that
$$ b_k = \inf \{ \Phi(z) : z \in E^+,\; z \perp E_{k-1} \; \| z \| = r_k \} \to +\infty $$
i.e. condition $(\Phi_2^\prime)$ in theorem \ref{Fountain-theorem} holds.
\label{BdF-2.2}
\end{lemma}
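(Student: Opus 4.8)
The plan is to estimate the functional $\Phi$ from below on the spheres $\{\|z\|=r_k\}$ inside the subspace $E^+ \cap E_{k-1}^\perp$, using the splitting $\Phi = P - \Psi$ together with a quantity that measures how strong the imbedding $E \subset X$ is on the ``high'' subspaces. Concretely, I would introduce
$$ \beta_k = \sup\{ \|z\|_X : z \in E^+,\ z \perp E_{k-1},\ \|z\|_E = 1 \}. $$
The first key step is to show that $\beta_k \to 0$ as $k \to \infty$. This is the standard argument that compactness of $E \subset X$ forces the operator norm of the inclusion, restricted to the orthogonal complements $E_{k-1}^\perp \cap E^+$, to go to zero: if not, one picks $z_k$ in these subspaces with $\|z_k\|_E = 1$ and $\|z_k\|_X \geq \eta > 0$; since $E_1 \subset E_2 \subset \cdots$ exhausts $E$, any weak limit of $z_k$ is orthogonal to every $E_n$, hence is $0$, so $z_k \rightharpoonup 0$ weakly in $E$; compactness of the imbedding then gives $z_k \to 0$ strongly in $X$, contradicting $\|z_k\|_X \geq \eta$.

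The second step is the elementary optimization. For $z \in E^+$ with $z \perp E_{k-1}$ and $\|z\|_E = r$, we have $\|z\|_X \le \beta_k r$, so
$$ \Phi(z) = P(z) - \Psi(z) \ge \alpha \|z\|_E^p - \beta(1 + \|z\|_X^q) \ge \alpha r^p - \beta - \beta \beta_k^q r^q. $$
Since $q > p$, the function $r \mapsto \alpha r^p - \beta\beta_k^q r^q$ is maximized at $r_k = \bigl( \alpha p / (\beta q \beta_k^q) \bigr)^{1/(q-p)}$, and plugging this in gives a lower bound of the form
$$ b_k \ge c\, \beta_k^{-pq/(q-p)} - \beta $$
for a positive constant $c$ depending only on $\alpha,\beta,p,q$. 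Here I would also note $r_k \to \infty$, which is harmless (the statement only requires the existence of \emph{some} radii $r_k$). Because $\beta_k \to 0$ by Step 1 and the exponent $pq/(q-p)$ is positive, the right-hand side tends to $+\infty$, so $b_k \to +\infty$, which is exactly $(\Phi_2^\prime)$.

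The main obstacle is Step 1 — proving $\beta_k \to 0$ — and within it the only delicate point is the weak convergence argument: one must make sure that a sequence $z_k$ with $z_k \perp E_{k-1}$, $\|z_k\|_E = 1$ is forced to converge weakly to $0$. This uses that $\overline{\bigcup_n E_n} = E$: a fixed $w \in E_m$ is orthogonal to $z_k$ for all $k > m$, so $\langle z_k, w\rangle \to 0$ for $w$ in the dense set $\bigcup_n E_n$, and then boundedness of $\|z_k\|_E$ upgrades this to $z_k \rightharpoonup 0$. After that, everything is the routine calculus of the preceding paragraph, and no further input beyond the hypotheses of the lemma (in particular no spectral information) is needed.
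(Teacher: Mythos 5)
Your argument is correct and complete: the quantity $\beta_k=\sup\{\|z\|_X:\ z\in E^+,\ z\perp E_{k-1},\ \|z\|_E=1\}$ tends to zero by exactly the weak-convergence/compactness argument you give (orthogonality to the exhausting family $E_n$ forces $z_k\rightharpoonup 0$ in the Hilbert space $E$, and the compact imbedding upgrades this to $\|z_k\|_X\to 0$), and the elementary optimization with $r_k=\bigl(\alpha p/(\beta q\beta_k^q)\bigr)^{1/(q-p)}$ then yields $b_k\ge c\,\beta_k^{-pq/(q-p)}-\beta\to+\infty$. There is no proof in the paper to compare against: the lemma is stated only as a quotation of Remark 2.2 of \cite{BdF}, and your proof is essentially the standard argument used there (and in the usual proof of the Fountain Theorem), so you have in effect supplied the missing details rather than taken a different route. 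Two cosmetic points: if $\beta_k=0$ for some $k$ the optimization degenerates, but then $\Phi(z)\ge\alpha r^p-\beta$ on the relevant sphere and any sufficiently large $r_k$ works; and your remark that $r_k\to\infty$ is indeed harmless, since in condition $(\Phi_3^\prime)$ the isomorphisms $T_k$ (the dilations $T_{\lambda_k}$ in the application) are chosen after the $r_k$ are fixed.
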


\section{Proof of the main Theorem}

\label{prueba}

\subsection{The Functional Setting}

Using conditions (\ref{superlinear}), (\ref{subcritical}) and ({\ref{condicion-extra}) we may choose $s,t$ such that $0<s,t<\frac{n}{2}$, $s+t=2$ and
$$ 2 <  p <  \frac{2(n+a)}{n-2t}, 2 < q < \frac{2(n+b)}{n-2s} $$
>From Theorem \ref{imbedding-theorem} we then have the compact imbeddings
\be
H^s_{rad} (\mathbb{R}^n) \subset L^q(\mathbb{R}^n,|x|^b dx), \quad
H^t_{rad} (\mathbb{R}^n) \subset L^p(\mathbb{R}^n,|x|^a dx)
\label{imbeddings}
\ee
The natural functional associated to (\ref{sistema}) is given by:
\be
\Phi(u,v) = \int_{\R^n} A^s u \cdot  A^t v - \int_{\R^n} H(x,u,v)
\label{our-functional}
\ee
in the subspace $E = H_{rad}^s(\R^n) \times H_{rad}^t(\R^n) \subset H^s(\R^n) \times H^t(\R^n)$, with
the pseudo-differential operator $ A^s u = (-\Delta+I)^{s/2} $ given in terms of the Fourier transform:
$$
\widehat{A^s u}(\omega)= (1+|\omega|^2)^{s/2} \hat{u}(\omega),
$$
and where $H$ is the Hamiltonian:
$$H(x,u,v)= \frac{|x|^b |u|^q}{q} + \frac{|x|^a |v|^p}{p}$$

The imbeddings (\ref{imbeddings}) imply that $\Phi$ is well defined in $E$ and $\Phi
\in C^1(E,\R)$ (see the appendix of \cite{R}).

\begin{defn}
\label{weak-sol}
We say that $z=(u,v)\in H_{rad}^s\times H_{rad}^t$ is an $(s,t)$-weak solution of the system (\ref{sistema-rn}) if $z$ is a critical point of the functional (\ref{our-functional}).
\end{defn}

\begin{rem}
The functional $\Phi$ is not well-defined in $H^s(\R^n) \times H^t(\R^n)$ because the imbedding (\ref{imbeddings})  is not valid in general for non-radial functions. However, the functional is well-defined in $\tilde E=(H^s(\R^n) \cap L^q(\mathbb{R}^n,|x|^b dx))\times (H^t(\R^n) \cap L^p(\mathbb{R}^n,|x|^a dx))$. Moreover, since $\Phi$ is invariant with respect to radial symmetries, the critical points of $\Phi$ in $E$ are also critical points in $\tilde E$ thanks to the Symmetric Criticality Principle (see Theorem 5.4 of \cite{P}).
\end{rem}

Next, consider the bilinear form $B:E \times E \to \R$ given by:
$$ B[z,\eta] := \int (A^s u A^t \phi + A^s \psi A^t v ) \; \hbox{where} \;
z=(u,v), \eta=(\psi,\phi) $$
Asociated with $B$ we have the quadratic form
$$ Q(z) = \frac{1}{2} B(z,z) = \int A^s u A^t v $$
It is well-known that the operator $L:E \to E$ defined by $\langle Lz, \eta
\rangle = B[z,\eta]$ has exactly two eigenvalues $+1$ and $-1$ and that the
corresponding eigenspaces are given by

$$
E^+ = \{ (u, A^{-t} A^s u) : u \in E^s\}, \qquad E^- =\{ (u, -A^{-t}A^s u) : u \in E^s \}.
$$
Then, we have that
\be
\Phi(z)= \frac{1}{2} \langle Lz, z \rangle  - \Psi(z)
\label{form-of-Phi}
\ee
where:
$$ \Psi(z) = \int H(x,u,v) $$

We now define the sequence of finite dimensional subspaces that we need to apply
Theorem \ref{Fountain-theorem}. For this purpose, choose an orthonormal basis $\{e_j\}_{j\in \mathbb{N}}$  of $H^s_{rad}(\R^n)$. By density, we can choose $e_j \in \mathcal{S}(\R^n)$ (the Schwarz class).
Then $f_j = A^{-t}A^s e_j$ form an orthonormal basis of $H^t_{rad}(\R^n)$,
$f_j \in \mathcal{S}(\R^n)$, and we may define the following finite dimensional
subspaces
$$ E_n^s = \langle e_j : j=1...n\rangle \subset H^s_{rad}(\R^n) $$
$$ E_n^t = \langle f_j : j=1...n\rangle \subset H^t_{rad}(\R^n) $$
$$ E_n = E_n^s \oplus E_n^t$$.

\subsection{The Palais-Smale condition}

In what follows, we will prove that the functional $\Phi$ satisfies
 conditions $(\Phi_1), (\Phi_2') - (\Phi_4'), (\Phi_5)$ in
Theorem \ref{Fountain-theorem}. We begin by checking the compactness
condition $(PS)_c^{\mathcal{F}}$:

\begin{lemma}
Condition $(\Phi_1)$ holds.
\label{lemma-PS}
\end{lemma}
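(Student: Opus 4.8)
The plan is to verify the Palais--Smale--Cerami condition $(PS)_c^{\mathcal{F}}$ for the functional $\Phi$ given by (\ref{form-of-Phi}) along the filtration $\mathcal{F}=(E_n)_{n\in\N}$. So let $(z_j)$ be a $(PS)_c^{\mathcal{F}}$ sequence: $z_j=(u_j,v_j)\in E_{n_j}$ with $n_j\to\infty$, $\Phi(z_j)\to c$, and $(1+\|z_j\|)(\Phi|_{E_{n_j}})'(z_j)\to 0$. The first and main step is to show this sequence is bounded in $E=H^s_{rad}\times H^t_{rad}$. Here I would exploit the superlinearity $\frac1p+\frac1q<1$ in the standard Ambrosetti--Rabinowitz fashion: the combination $\Phi(z_j)-\langle (\Phi|_{E_{n_j}})'(z_j), \theta z_j\rangle$ for a suitable splitting $\theta z_j = (\frac1q u_j, \frac1p v_j)$ (note that this vector still lies in $E_{n_j}$, since $E_{n_j}$ is a product space, so it is a legitimate test direction) kills the $\Psi$ term up to a good sign and leaves a multiple of the quadratic form $\langle Lz_j,z_j\rangle=2\int A^su_j\,A^tv_j$, which controls $\|u_j\|_{H^s}\|v_j\|_{H^t}$ from below after absorbing the bilinear cross term. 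The delicate point — and the main obstacle — is that $L$ is indefinite, so one does not directly get control of each factor; the argument has to be arranged so that the estimate yields $\|u_j\|_{H^s}\|v_j\|_{H^t}\le C(1+\|u_j\|_{H^s}+\|v_j\|_{H^t})$ together with a comparison of the two norms coming from testing against $(\phi,0)$ and $(0,\phi)$ separately, and then a homogeneity/degree argument closes the boundedness. This is exactly the reasoning of \cite{BdF}, \cite{dFF}, adapted here to the weighted setting.

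Once boundedness is established, I would extract a subsequence with $z_j\rightharpoonup z=(u,v)$ weakly in $E$. By the \emph{compact} imbeddings (\ref{imbeddings}) from Theorem \ref{imbedding-theorem}, we get $u_j\to u$ strongly in $L^q(\R^n,|x|^b\,dx)$ and $v_j\to v$ strongly in $L^p(\R^n,|x|^a\,dx)$. Consequently the nonlinear terms converge: $\Psi'(z_j)\to\Psi'(z)$ strongly in $E^*$ (using that the Nemytskii/gradient maps $u\mapsto |x|^b|u|^{q-2}u$ and $v\mapsto |x|^a|v|^{p-2}v$ are continuous from $L^q(|x|^b\,dx)\to (L^q(|x|^b\,dx))^*$ etc., by the appendix of \cite{R} and the choice $2<p<2(n+a)/(n-2t)$, $2<q<2(n+b)/(n-2s)$ which makes the conjugate exponents land correctly, since $s+t=2$).

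The final step is to upgrade weak convergence to strong convergence of $z_j$ itself, so that the limit $z$ is genuinely a critical point of $\Phi$ on all of $E$. For this I would test the approximate-gradient condition against $z_j - P_{n_j}z$, where $P_{n_j}$ is the orthogonal projection onto $E_{n_j}$ (which converges to the identity strongly since $\overline{\bigcup E_n}=E$ and the basis is chosen compatibly). Writing $\langle Lz_j, z_j-P_{n_j}z\rangle = \langle \Psi'(z_j), z_j-P_{n_j}z\rangle + o(1)$: the right-hand side tends to $0$ because $\Psi'(z_j)$ converges strongly while $z_j-P_{n_j}z\rightharpoonup 0$; on the left-hand side, using the split $E=E^+\oplus E^-$ on which $L$ acts as $+1$ and $-1$, one gets $\|z_j^+-P_{n_j}z^+\|^2 - \|z_j^--P_{n_j}z^-\|^2\to 0$, and a symmetric computation testing against $z_j^+-P_{n_j}z^+$ minus the one against $z_j^- - P_{n_j}z^-$ (both admissible since $E_{n_j}=E_{n_j}^+\oplus E_{n_j}^-$ is $L$-invariant) separates the two signs and forces $\|z_j^\pm - P_{n_j}z^\pm\|\to 0$, hence $z_j\to z$ strongly in $E$. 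Then passing to the limit in $(\Phi|_{E_{n_j}})'(z_j)\to 0$ together with $P_{n_j}\to \mathrm{Id}$ gives $\Phi'(z)=0$, which is the desired conclusion. The essential difficulty throughout is handling the indefinite quadratic form $Q$ and the Galerkin restriction simultaneously; both are resolved by systematically using the $L$-invariant product structure of the subspaces $E_n=E_n^s\oplus E_n^t$ and the compactness supplied by Theorem \ref{imbedding-theorem}.
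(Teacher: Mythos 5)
Your overall strategy (first prove the $(PS)^{\mathcal F}_c$ sequence is bounded, then use the compact imbeddings of Theorem \ref{imbedding-theorem} to pass to a critical point) is the paper's strategy, and your second half is fine: where the paper simply invokes Remark 2.1 of \cite{BdF} ($L$ Fredholm of index zero, $\nabla\Psi$ completely continuous), you give a direct Galerkin argument with $z_j-P_{n_j}z$ split along $E^\pm$, which works because $E_{n_j}=E_{n_j}^+\oplus E_{n_j}^-$ is invariant under the splitting. The problem is in the boundedness step, which is the heart of the lemma. With your test vector $\theta z_j=(\tfrac1q u_j,\tfrac1p v_j)$ one computes $\Phi(z_j)-\Phi'(z_j)(\theta z_j)=\bigl(1-\tfrac1p-\tfrac1q\bigr)Q(z_j)$: it is the nonlinear term $\Psi$ that cancels and the \emph{indefinite} quadratic form that survives, exactly as you say; but then your key claim, that $Q(z_j)=\int A^su_j\,A^tv_j$ ``controls $\|u_j\|_{H^s}\|v_j\|_{H^t}$ from below,'' is false: on $E^-$ one has $Q(z)=-\tfrac12\|z\|_E^2<0$, and in general $Q$ admits no lower bound by the product of the norms. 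An upper bound $Q(z_j)\le C(1+\|z_j\|_E)$ by itself carries no information about $\|z_j\|_E$. Moreover, even the inequality you aim for, $\|u_j\|_{H^s}\|v_j\|_{H^t}\le C(1+\|u_j\|_{H^s}+\|v_j\|_{H^t})$, would not imply boundedness (one factor may stay bounded while the other blows up), so the ``homogeneity/degree argument'' you appeal to is not closed.

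The paper's argument is arranged the other way around: the test vector is scaled (there, $w_j=\tfrac{pq}{p+q}(\tfrac1q u_j,\tfrac1p v_j)$ up to an inessential typo) so that the quadratic part cancels and what survives is $\bigl(\tfrac{pq}{p+q}-1\bigr)\int H(x,u_j,v_j)\ge 0$, positive precisely by superlinearity; this yields $\|u_j\|^q_{L^q(|x|^b)}+\|v_j\|^p_{L^p(|x|^a)}\le C(1+\|z_j\|_E)$. Then, testing with $(\psi,0)$, $\psi\in E^s_{n_j}$, and $(0,\psi)$, $\psi\in E^t_{n_j}$, H\"older plus the imbedding and a duality argument \emph{inside the Galerkin subspace} (using $\int A^s\psi A^tv_j=0$ for $\psi\perp E^s_{n_j}$, which needs the compatible choice $f_j=A^{-t}A^se_j$) give $\|v_j\|_{H^t}\le C(\|u_j\|^{q-1}_{L^q(|x|^b)}+1)$ and $\|u_j\|_{H^s}\le C(\|v_j\|^{p-1}_{L^p(|x|^a)}+1)$; substituting back, the superlinear exponents $q/(q-1),p/(p-1)>1$ beat the linear right-hand side and force boundedness. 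Your sketch gestures at the $(\phi,0)$, $(0,\phi)$ tests but never states these quantitative estimates nor how they combine, so the decisive mechanism is missing. (Incidentally, your step could be repaired cheaply: since $\Phi(z_j)\to c$, the bound $Q(z_j)\le C(1+\|z_j\|_E)$ gives $\int H=Q(z_j)-\Phi(z_j)\le C(1+\|z_j\|_E)$, after which the paper's duality estimates close the argument; but as written the proposal's boundedness proof does not go through.)
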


\begin{proof}
Using the imbedding in Theorem \ref{imbedding-theorem}, it
follows from standard arguments (see for example \cite{R}) that $\Phi$ is well defined, and morover
$\Phi \in C^1(E,\R)$.

It remains to show that $\Phi$ satisfies the $(PS)_c^{\mathcal{F}}$ condition.
Assume that we have a sequence $z_{j} \in E_{n_j}$ such that
$\Phi(z_j)\to c, (1+\| z_j\|) (\Phi|_{E_{n_j}})' (z_j) \to 0 $.

We observe that since the functional $\Phi$ has the form
(\ref{form-of-Phi}) where $L:E \to E$ is a linear Fredholm operator of index
zero and  $\nabla \Psi: E \to E$ is completely continuous (due to the compactness
of the imbeddings (\ref{imbeddings}) ), then by Remark
2.1 of \cite{BdF}, it is enough to prove that $z_j$ is bounded.

Since $(\Phi|_{E_{n_j}})^\prime (z_j) \to 0$, in particular we have that
\be
|  \Phi^\prime  (z_j) (w) | \le C  \| w \|_E \; \hbox{for all} \; w \in E_{n_j}
\label{cota-phi-prime}
\ee

If $z_j=(u_j, v_j)$, taking $w_j = \frac{pq}{p + q} \left( \frac{1}{p} u_j,
\frac{1}{q} v_j\right)$, we have that

$$
C (1+\| w_j\|_E) \ge \Phi (z_j) - \Phi' (z_j)(w_j)
$$
$$
= \int A^s u_j A^t v_j - \int H(x,u_j,v_j)
$$
$$
- \left[ \frac{q}{p+q} \int A^s u_j A^t v_j +
\frac{p}{p+q}\int A^s u_j A^t v_j\right.
$$
$$\left.- \frac{q}{p+q} \int
H_u(x,u_j,v_j) u_j - \frac{p}{p+q} \int H_v(x,u_j,v_j) v_j
\right]
$$
$$
= \left( \frac{pq}{p+q}-1 \right) \int H(x,u_j,v_j)
$$
Using (\ref{superlinear}) and Theorem \ref{imbedding-theorem} we obtain
\be
\int H(u_j, v_j, x) \, dx = \int \frac{|x|^b |u_j|^q}{q} +
\frac{|x|^a |v_j|^p}{p} \leq C (1+\| u_j\|_{H^s} + \| v_j \|_{H^t} ).
\label{cota1}
\ee

Now, considering $w = (\psi, 0), \psi\in E^s_{n_j} \subset H^s_{rad}(\R^n)$ in (\ref{cota-phi-prime})
$$
\left| Q'(z_j)(w) \right| = \left| \int A^s \psi A^t v_j \right| \le
\int |H_u (u_j, v_j, x)  \psi | \, dx + C\| \psi \|_{H^s}
$$
$$ = \int |x|^b |u_j|^{q-2} u_j \psi + C\| \psi \|_{H^s} $$
$$
\leq\|u_j \|_{L^q(|x|^{b})}^{q-1}
\|\psi\|_{L^q(|x|^b)}  + C\|\psi \|_{H^s}
$$
and using Theorem \ref{imbedding-theorem} we conclude that
$$
\left| \int A^s \psi A^t v_j \right| \le C
\left( \|u_j\|_{L^q(|x|^b)}^{q-1} + 1 \right) \|\psi \|_{H^s}
$$
Using a duality argument (and the fact that $\int A^s \psi A^t v_j = 0 \;
\forall \psi \in (E_{n_j}^s)^{\perp}$), this implies that
\be \| v_j \|_{H^t} \leq  C \left( \|u_j\|_{L^q(|x|^b)}^{q-1}  + 1\right)
\label{cota2} \ee

Similarly, taking $w = (0,\psi), \psi\in E^t_{n_j}$ in (\ref{cota-phi-prime}), we obtain
$$
| Q^\prime(z_j)(w)  | = \left| \int A^s u_j A^t \psi \right| \le
C \left( \| v\|_{L^p(|x|^a)}^{p-1}  + 1 \right) \|\psi \|_{H^t}
$$
 hence,
\be
\| u_j \|_{H^s} \leq C \left( \| v_j\|_{L^p(|x|^a)}^{p-1}  + 1 \right)
\label{cota3}
\ee

Therefore, replacing (\ref{cota2}) and (\ref{cota3}) into (\ref{cota1}),we obtain
$$ \left( \frac{1}{C} \| u_j \|_{H^s} -1 \right)^{q/(q-1)} +
\left( \frac{1}{C} \| v_j \|_{H^t} -1 \right)^{p/(p-1)}
\leq C \left( 1 + \|u_j \|_{H^s} + \| v_j \|_{H^t} \right) $$

Since $p,q>1$, we conclude that $z_j$ is bounded in $E$, as we have claimed. It follows that $\Phi$ satisfies the $(PS)_c^{\mathcal{F}}$ condition.
\end{proof}

\begin{lemma}
Condition $(\Phi_2\prime)$ holds.
\end{lemma}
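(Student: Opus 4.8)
The plan is to verify condition $(\Phi_2')$ by invoking Lemma \ref{BdF-2.2}, so the real work is only to put the functional $\Phi$ into the abstract form required there. I would take $E = H^s_{rad}(\R^n)\times H^t_{rad}(\R^n)$ with its Hilbert norm, take $X$ to be the Banach space $L^q(\R^n,|x|^b\,dx)\times L^p(\R^n,|x|^a\,dx)$, and recall from \eqref{imbeddings} that the imbedding $E\subset X$ is compact. The decomposition $\Phi = P - \Psi$ is read off from \eqref{form-of-Phi}: we want $P(z)=\frac12\langle Lz,z\rangle$ and $\Psi(z)=\int H(x,u,v)$. The subtlety is that $\frac12\langle Lz,z\rangle = \int A^s u\,A^t v$ is \emph{not} coercive on all of $E$ (it is positive on $E^+$ and negative on $E^-$), but Lemma \ref{BdF-2.2} only needs $P(z)\ge\alpha\|z\|_E^p$ on the relevant subspace, and in $(\Phi_2')$ one minimizes over $z\in E^+$. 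So the first step is to observe that on $E^+$ one has $\langle Lz,z\rangle = \|z\|_E^2$ (since $E^+$ is the $+1$ eigenspace of $L$), giving $P(z)=\tfrac12\|z\|_E^2$, i.e. $\alpha=\tfrac12$, exponent $p=2$ in the notation of that lemma.

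Next I would bound $\Psi$. Using the two compact imbeddings in \eqref{imbeddings} together with their continuity, for $z=(u,v)$ we have
\[
|\Psi(z)| = \int \frac{|x|^b|u|^q}{q} + \frac{|x|^a|v|^p}{p}
= \frac1q\|u\|_{L^q(|x|^b)}^q + \frac1p\|v\|_{L^p(|x|^a)}^p
\le C\bigl(\|u\|_{H^s}^q + \|v\|_{H^t}^p\bigr).
\]
Setting $m=\max(p,q)$ and using $\|u\|_{H^s}^q + \|v\|_{H^t}^p \le C(1 + \|z\|_E^m)$ (each term is controlled by $1+\|z\|_E^m$ since $p,q\le m$ and $\|u\|_{H^s},\|v\|_{H^t}\le\|z\|_E$), we obtain $|\Psi(z)| \le \beta(1+\|z\|_X^{m})$ — or more directly $|\Psi(z)|\le\beta(1+\|z\|_X^{q})$ after also bounding in the $X$-norm, which is what the lemma literally wants; either way the exponent is $m=\max(p,q) > 2 = p_{\text{lem}}$ by hypothesis \eqref{superlinear}. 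Thus all the structural hypotheses of Lemma \ref{BdF-2.2} are met: $P(z)\ge\tfrac12\|z\|_E^2$ on $E^+$, $|\Psi(z)|\le\beta(1+\|z\|_X^m)$, and $m>2$.

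Finally I would apply Lemma \ref{BdF-2.2} with the filtration $E_n^+ = E^+\cap E_n$ (the finite dimensional subspaces built from the basis $\{e_j\}$ in the functional setting), concluding directly that there exist $r_k>0$ with
\[
b_k = \inf\{\Phi(z) : z\in E^+,\ z\perp E_{k-1},\ \|z\|=r_k\} \to +\infty,
\]
which is exactly $(\Phi_2')$. The only point that requires a line of care — and the main (mild) obstacle — is the coercivity of $P$: one must restrict attention to $E^+$ before claiming $P(z)=\tfrac12\|z\|_E^2$, because on all of $E$ the quadratic form is indefinite; but since the infimum in $(\Phi_2')$ and in Lemma \ref{BdF-2.2} is taken over $E^+$ anyway, this causes no difficulty. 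The estimate on $\Psi$ is routine given \eqref{imbeddings}, and the passage $m>2$ is immediate from $p,q>2$.
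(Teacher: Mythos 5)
Your proposal is correct and takes essentially the same route as the paper: both apply Lemma \ref{BdF-2.2} on $E^+$ with $P=Q$ and $\Psi=\int H(x,u,v)$, using $Q(z)=\tfrac12\|z\|_E^2$ for $z\in E^+$ and the imbeddings (\ref{imbeddings}) to bound $\Psi$ by a power $\max(p,q)>2$ of the norm. Your explicit remark that the quadratic form is indefinite on all of $E$ and that the restriction to $E^+$ is what makes the coercivity hypothesis hold is exactly the (implicit) point of the paper's argument, so nothing is missing.
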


\begin{proof}
We follow the proof of Lemma 3.2 of \cite{BdF}. We apply Lemma \ref{BdF-2.2} in $E^+$, with
$$
P(z):= Q(z) = \int A^s u A^t v, \quad \Psi(z) := \int H(x,u,v) \, dx
$$
Since $z\in E^+$,
$$
Q(z) = \int A^s u A^t (A^{-t} A^s u) \, dx = \int |A^s u|^2 \, dx = \|u\|_{H^s}^2 = \frac{1}{2} \|z\|_E^2
$$
Using the imbeddings (\ref{imbeddings}), we have that
$$
\left| \int H(x,u,v) \right| \le  C \left( \|u\|_{H^s}^q + \|v\|_{H^t}^p \right)
\le C \left( \|z\|_E^q + \|z\|_E^p \right) \le \|z\|_E^{\max(p,q)}
$$
Thus, we have $\Phi = P - \psi$  with $P(z) \ge \frac{1}{2} \|z\|_E^2$ and $|\psi(z)| = |\int H(x,u,v) |\le C (1 + \|z\|_E^{\max(p,q)})$, with $\max(p,q)>2$. Therefore, by Lemma \ref{BdF-2.2}, condition $(\Phi_2\prime)$ holds.
\end{proof}

\subsection{The Geometry of the Functional $\Phi$}

In the next two lemmas, we check the requiered conditions on the geometry of the
functional $\Phi$:

\begin{lemma}
Condition $(\Phi_3\prime)$ holds.
\label{lema-geo2}
\end{lemma}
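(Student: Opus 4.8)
The plan is to construct the isomorphisms $T_k$ and the radii $R_k$ explicitly, following Lemma 3.2 of \cite{BdF}. Recall that $(\Phi_3')$ requires us to find, for each $k$, an isomorphism $T_k:E\to E$ with $T_k(E_n)=E_n$ for all $n$, together with a radius $R_k$, such that whenever $z=z^++z^-\in E_k^+\oplus E^-$ has $\max(\|z^+\|,\|z^-\|)=R_k$, one has $\|T_k z\|>r_k$ (with $r_k$ the radius from $(\Phi_2')$) and $\Phi(T_k z)<0$. The natural choice, which respects all the $E_n$, is to take $T_k$ acting diagonally as $T_k(u,v)=(\gamma_k u,\gamma_k v)$ for a large scalar $\gamma_k>0$, or more flexibly $T_k z = \gamma_k z$; since the $E_n^\pm$ are eigenspaces of $L$ and $E_n=E_n^+\oplus E_n^-$, scalar multiplication preserves each $E_n$ and is an isomorphism, so the structural requirement $T_k(E_n)=E_n$ is immediate. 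The condition $\|T_k z\| = \gamma_k R_k > r_k$ is then arranged by choosing $\gamma_k$ large once $R_k$ is fixed.

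The substantive point is the sign condition $\Phi(T_k z)<0$ on the indicated sphere. Write $z=z^++z^-$ with $z^+\in E_k^+$ and $z^-\in E^-$; using \eqref{form-of-Phi} and the fact that $L=I$ on $E^+$ and $L=-I$ on $E^-$, we have
$$
\Phi(T_k z)=\frac{\gamma_k^2}{2}\bigl(\|z^+\|^2-\|z^-\|^2\bigr)-\Psi(\gamma_k z),
\qquad \Psi(w)=\int_{\R^n}H(x,u,v)\,dx\ge 0.
$$
First I would use that on the \emph{finite-dimensional} space $E_k^+$ all norms are equivalent, so there is a constant $c_k>0$ with $\Psi(w)\ge c_k\|w^+\|_E^{\min(p,q)}$ for $w=(u,v)$ with $w^+$ its $E^+$-component — here one exploits that $H(x,u,v)$ controls $|x|^b|u|^q+|x|^a|v|^p$ from below and that on $E_k^+$ the $L^q(|x|^b)$- and $L^p(|x|^a)$-norms are comparable to the $H^s\times H^t$-norm (this is where a lower bound, not just the imbedding inequality, is needed, and it is only available because we are on a fixed finite-dimensional subspace). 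Consequently
$$
\Phi(T_k z)\le \frac{\gamma_k^2}{2}\|z^+\|_E^2-c_k\,\gamma_k^{\min(p,q)}\|z^+\|_E^{\min(p,q)}.
$$
Since $\min(p,q)>2$, for each fixed $z^+\ne 0$ the right-hand side tends to $-\infty$ as $\gamma_k\to\infty$; but we must make this uniform in $z$ on the sphere $\max(\|z^+\|,\|z^-\|)=R_k$, which includes points with $z^+=0$.

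The resolution, exactly as in \cite{BdF}, is to order the choices correctly: fix $k$, then \emph{first} choose $\gamma_k$ large (depending on $k$), and \emph{then} choose $R_k$ large depending on $\gamma_k$. On the sphere $\max(\|z^+\|,\|z^-\|)=R_k$ split into two cases. If $\|z^+\|_E\ge R_k^{1/2}$ (say), then $\|z^+\|_E^{\min(p,q)-2}\to\infty$ as $R_k\to\infty$, so the superquadratic term dominates: $\Phi(T_kz)\le \|z^+\|_E^2(\tfrac{\gamma_k^2}{2}-c_k\gamma_k^{\min(p,q)}\|z^+\|_E^{\min(p,q)-2})<0$ once $R_k$ is large. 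If instead $\|z^+\|_E< R_k^{1/2}$, then necessarily $\|z^-\|_E=R_k$ (for $R_k>1$), and the negative definite term $-\tfrac{\gamma_k^2}{2}\|z^-\|_E^2=-\tfrac{\gamma_k^2}{2}R_k^2$ is very negative while the positive term $\tfrac{\gamma_k^2}{2}\|z^+\|_E^2<\tfrac{\gamma_k^2}{2}R_k$ and $-\Psi\le 0$, so $\Phi(T_kz)\le\tfrac{\gamma_k^2}{2}(R_k-R_k^2)<0$ for $R_k>1$. In both cases $\Phi(T_kz)<0$. Finally, having fixed $\gamma_k$ and $R_k$, enlarge $\gamma_k$ further if necessary so that $\gamma_k R_k\ge \gamma_k R_k^{1/2}>r_k$ as well (note $\|T_kz\|_E=\gamma_k\|z\|_E\ge \gamma_k\max(\|z^+\|,\|z^-\|)\cdot 1 \ge \gamma_k R_k$ up to the fixed equivalence of the sphere's norm), which gives the remaining inequality $\|T_k\|>r_k$ in the notation of the theorem. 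The main obstacle is precisely this bookkeeping: getting a genuine \emph{uniform} negativity on the full sphere $\max(\|z^+\|,\|z^-\|)=R_k$ (in particular handling the degenerate directions $z^+\approx 0$), which forces the non-obvious order of quantifiers $k\leadsto\gamma_k\leadsto R_k$ and the use of finite-dimensionality of $E_k^+$ to get a lower bound on $\Psi$.
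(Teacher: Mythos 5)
There is a genuine gap, and it sits exactly at the step you flagged as the "substantive point". In condition $(\Phi_3')$ the negative component $z^-$ ranges over the \emph{whole} infinite-dimensional space $E^-$, so for $w=T_kz=(u,v)$ the components are $u=u^++u^-$ and $v=v^++v^-$ with arbitrary infinite-dimensional parts; $w$ does not lie in $E_k^+$ or in any fixed finite-dimensional subspace. Hence the asserted inequality $\Psi(w)\ge c_k\|w^+\|_E^{\min(p,q)}$ cannot be deduced from "all norms are equivalent on $E_k^+$": that equivalence controls $\|u^+\|_{L^q(|x|^b)}$ from below by $\|u^+\|_{H^s}$, but a lower bound on a norm does not pass through the sum $u^++u^-$ (the $E^-$-part can cancel $u^+$ in the $u$-component, e.g.\ $z^-=(-u^+,A^{-t}A^su^+)$ gives $u\equiv 0$). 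The inequality you need is essentially true (cancellation cannot occur simultaneously in $u$ and $v$), but proving it is the actual content of the paper's argument: the paper decomposes $z^-=z_1^-+z_2^-$ with $z_1^-\in E_k^-$, sets $\bar z=z^++z_1^-\in E_k$, and shows $\|u\|_{L^q(|x|^b)}\ge\gamma_k\|\bar u\|_{H^s}$ by pairing $u$ against $\bar u$ in $L^2$ and applying H\"older with the dual weight $|x|^{-b/(q-1)}$ (finite on the Schwartz-class finite-dimensional subspace because $b<n(q-1)$), using finite-dimensional norm equivalence only for $\bar u$; then $\max(\|\bar u\|_{H^s},\|\bar v\|_{H^t})\gtrsim\|z^+\|$ yields the lower bound on $\int H$. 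Without this projection/duality step your case analysis on the sphere $\max(\|z^+\|,\|z^-\|)=R_k$ cannot be completed, since in the case $\|z^+\|$ large you have no usable lower bound on $\Psi(T_kz)$.

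A secondary, non-fatal difference: you take $T_k$ to be a scalar dilation, whereas the paper uses the anisotropic dilations $T_\lambda(u,v)=(\lambda^\mu u,\lambda^\nu v)$ with $\mu=(m-q)/q$, $\nu=(m-p)/p$, $m>\max(p,q)$, which balance the two terms of $H$ so that the argument only needs $m>\mu+\nu+2$, i.e.\ $\frac1p+\frac1q<1$. Your scalar version instead leans on $\min(p,q)>2$; since the paper assumes $p,q>2$ this would be acceptable here (and your ordering of choices $k\leadsto\gamma_k\leadsto R_k$ and the two-case split on the sphere are fine), but it is a genuinely less robust route, and in any case it does not repair the missing lower bound on $\Psi$ described above.
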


\begin{proof}
We follow the proof of Lemma 5.1 of \cite{BdF}.
We want to prove that there exist isomorphisms $T_k: E \to E \; (k\in \mathbb{N})$ such that $T_k(E_n)= E_n$ for all $k, n$ and that there exist $R_k >0  (k\in \mathbb{N})$ such that, if $z = z^+ + z^- \in E_k^+ \oplus E_k^- $ with $R_k = \max (\|z^+\|, \|z^-\|)$, then $\|T_k z\| > r_k$ and $\phi(T_k z) <0$ ($r_k$ being the same as that in condition $(\Phi_2\prime)$).

We want to see that there exists $\lambda_k$ such that the above condition holds with  $T_k = T_{\lambda_k}$ and $R_k = \lambda_k$, where
$$
T_{\lambda_k}(u,v) = (\lambda_k^\mu u, \lambda_k^\nu v) \qquad \mbox{with }
\mu =\frac{m-q}{q}, \nu = \frac{m-p}{p}, \quad m> \max(p,q).
$$

Clearly, $T_k: E \to E$ is isomorphism for all $k$. Moreover, $T_{\lambda_k} E_n = E_n$ for all $k$ and, for all $\lambda > 0$, we have that
\be
\int_{\R^n} H(x,T_\lambda z) \ge C \left( \lambda^{\mu q} \int_{\R^n} |u|^q |x|^b +
\lambda^{\nu p} \int_{\R^n} |v|^p |x|^a \right)
\label{cota-inferior-H}
\ee

For $z = z^+ + z^- \in E_k^+ \oplus E_k^-,$ let $z^- = z_1^- + z_2^-$ with $z_1^- \in E_k^-$ and $z_2^- \perp E_k^-$, and let $\bar z = z^+ + z_1^-$. If $z= (u,v)$, we extend these definitions to $u$ and have that $\bar u = u^+ + u_1^- $  and, therefore, $u_2^- \perp \bar u$ in $L^2$.

Then,
$$
\| \bar u\|^2_{L^2} = |\langle \bar u, \bar u \rangle_{L^2}| =
 |\langle \bar u + u_2^- , \bar u \rangle_{L^2}| =
|\langle u, \bar u \rangle_{L^2}|
$$

$$
= \int_{\R^n} |u| |\bar u| |x|^{b/q} |x|^{-b/q} \le \| u \|_{L^q(|x|^b)}
\| \bar u\|_{L^{q'}(|x|^{-b/(q-1)})}
$$

But, since $\bar{u}\in E^k_s \subset \mathcal{S}(\R^n)$, we have that
$$
\|\bar u\|_{L^{q'}(|x|^{-b/(q-1)})} = \left( \int_{\R^n} |\bar u|^{q'}
|x|^{-b/(q-1)} \right)^{1/q'} < +\infty \; \hbox{since} \;  b< n(q-1) $$
and, thanks to the equivalence of the norms $\| \bar u\|_{L^2}$ and $\|\bar u\|_{L^{q'}(|x|^{-b/(q-1)})}$
(in the finite dimensional subespace $E^s_k$), we obtain
$$
\| u \|_{L^q(|x|^b)} \ge \gamma_k \|\bar u \|_{H^s} \; \forall \; u \in E^s_k
$$
for some $\gamma_k>0$. Similarly there exists $\overline{\gamma}_k >0$ such that
$$
\| v \|_{L^p(|x|^a)} \ge \overline{\gamma}_k \|\bar v \|_{H^t} \; \forall \; v \in E^t_k
$$

It then follows from (\ref{cota-inferior-H}) that
$$ \int_{\R^n} H(x,T_\lambda z) \geq C \left( \lambda^{\mu q} \; \gamma_k^q
\|\overline{u} \|_{H^s}^q + \lambda^{\nu p}  \; \overline{\gamma}_k^p
\| \overline{v} \|_{H^t}^p \right) $$

and (as in lemma 4.2 of \cite{BdF}) we get a lower bound of the form:
$$ \int_{\R^n} H(x,T_\lambda z) \geq c \min \left\{ \frac{1}{2^q}
\lambda^{\mu q} \gamma_k^{q} \lambda^q, \frac{1}{2^p} \lambda^{\nu p}
\overline{\gamma}_k^p \lambda^p \right\} \geq \sigma_k \lambda^{m} $$
provided that $\| z^+ \|_E=\lambda$.

On the other hand,
$$ Q(T_\lambda z) =  \lambda^{\nu+\mu} (\| z^+\|_E^2 - \| z^-\|_E^2) \leq
\lambda^{\nu+\mu+2} $$
for $\|z^+ \|_E= \lambda$. As a consequence, we have that
$$ \Phi(T_\lambda z) \leq \lambda^{\nu+\mu+2} - \sigma_k \lambda^m $$
Since $m > \nu + \mu +2 $, it follows that there is a $\lambda_0(k)$ such that
$T_{\lambda_k}(z)<0$ if $\lambda_k > \lambda_0(k)$.

Also we have that
$$ \| T_\lambda z \|_E \geq \lambda^{\min(\nu,\mu)} \| z \|_E^2 $$
which implies that
$$ \| T_\lambda z \|_E \geq \lambda_k^{\min(\mu,\nu)+2} \; \hbox{for}
\; \max(\| z^+ \|_E, \|z^- \|_E)= \lambda_k $$

Therefore, it is possible to select $\lambda_k>0$ such that
$$ \Phi(T_{\lambda_k} z) \leq 0 \; \hbox{and} \; \| T_{\lambda_k} z \|_E
\geq r_k $$
for any given $r_k$.
\end{proof}

Finally, we observe that condition $(\Phi_5)$ holds trivially. Therefore, all the conditions of Theorem
\ref{Fountain-theorem} are fulfilled, and hence the proof of Theorem
\ref{main-result} is complete.

\end{document}